\newcommand{\dimension}{d}
\newcommand{\degreetwo}{2}
\newcommand{\C}{\mathbb{C}}
\newcommand{\R}{\mathbb{R}}
\newcommand{\euclideanspace}{\R^{\dimension}}
\newcommand{\Z}{\mathbb{Z} }
\newcommand{\lattice}{\Z^{\dimension}}
\newcommand{\T}{\mathbb{T} }
\newcommand{\torus}{\mathbb{T}^\dimension}
\newcommand{\N}{\mathbb{N} }
\newcommand{\Zmod}[1]{\Z/#1\Z}
\newcommand{\unitsmod}[1]{(\Z/#1 \Z)^\times}
\newcommand{\latticepoint}{m}
\newcommand{\toruspoint}{\xi}
\newcommand{\spacepoint}{x}
\newcommand{\freqpoint}{\xi} %frequency variable
\newcommand{\contFT}[1]{\widetilde{#1}}
\newcommand{\arithmeticFT}[1]{\widehat{#1}}
\newcommand{\torusFT}[1]{\widehat{#1}}
\newcommand{\latticeFT}[1]{\widehat{#1}}
\newcommand{\spheremeasure}{\sigma}
\newcommand{\arithmeticsphere}{S^{\dimension-1}(\radius)} % the arithmetic version aka lattice points on the continuous version
\newcommand{\arithmeticspheremeasure}[1]{\sigma_{#1}}
\newcommand{\radius}{r}
\newcommand{\radii}{\mathcal{R}}
\newcommand{\acceptableradii}{\mathcal{R}_{full}}
\newcommand{\sequencegrowth}{w}
\newcommand{\norm}[1]{\left\Arrowvert #1 \right\Arrowvert}
\newcommand{\absolutevalueof}[1]{\lvert #1 \rvert}
\newcommand{\operatornorm}[2]{\left\Arrowvert #2 \right\Arrowvert_{#1}}
\newcommand{\lpnorm}[2]{\left\| #2 \right\|_{\ell^{#1}(\Z^\dimension)}}
\newcommand{\ellpoperatornorm}[1]{\operatornorm{p \to p}{#1}}
\newcommand{\inparentheses}[1]{\left( #1 \right)}
\newcommand{\inbraces}[1]{\left\{ #1 \right\}}
\newcommand{\inbrackets}[1]{\left[ #1 \right]}
\newcommand{\setof}[1]{\left\{ #1 \right\}}
\newcommand{\gcdof}[2]{\left( #1, #2 \right)}
\newcommand{\eof}[1]{e \left( #1 \right)}
\newcommand{\inverse}[1]{\frac{1}{#1}}
\newcommand{\convolvedwith}{*}
\newcommand{\fxn}{f}
\newcommand{\bumpfxn}{\varphi}
\newcommand{\charfxn}[1]{{\bf 1}_{#1}}
\newcommand{\largenum}{N}
\newcommand{\primepower}{k}
\newcommand{\orderofprime}[1]{v_\oddprime(#1)}
\newcommand{\totientfunction}[1]{\varphi(#1)}
\newcommand{\modulus}{q}
\newcommand{\unit}{a}
\newcommand{\twistedGausssum}[1]{G(\unit,\modulus,#1)}
\newcommand{\twistedKloostermansum}[1]{K(\modulus,\largenum,#1)}
\newcommand{\singularseries}[1]{\mathfrak{S}(#1)}
\newcommand{\oddprime}{\mathfrak{p}}
\newcommand{\theprimes}{\mathfrak{P}}
\newcommand{\goodprimes}{\mathfrak{P}_{good}}
\newcommand{\badprimes}{\mathfrak{P}_{bad}}
\newcommand{\goodmodulus}{\modulus_{good}}
\newcommand{\badmodulus}{\modulus_{bad}}
\newcommand{\dilateby}[1]{\operatorname{Dil}_{#1}}
\newcommand{\Kloostermankernel}{K^{\modulus}_\radius}
\newcommand{\Kloostermanoperator}{C^{\modulus}_\radius}
\newcommand{\error}{{E}_{\radius}}
\newcommand{\HLop}{C_{\radius}}
\newcommand{\avgop}{A_{\radius}}
\newcommand{\maxop}{A_*}
\newcommand{\sequentialarithmeticsphericalmaxfxn}{A_{\radii}^* \fxn}
\newcommand{\sequentialarithmeticsphericalmaxop}{A_{\radii}^* }
\newcommand{\lacunaryavgfxn}[1]{A_{#1} \fxn}
\newcommand{\arithmeticsphericalaveragefxn}{A_{\radius}\fxn} % #1 is the radius, #2 is the function
\newcommand{\arithmeticsphericalavgop}[1]{A_{#1}} % #1 is the radius
\newcommand{\arithmeticsphericalmaxfxn}{A_*\fxn}
\newcommand{\arithmeticsphericalmaxop}{A_*}
\newtheorem{prop}{Proposition}[section]
\newtheorem{lemma}{Lemma}[section]
\newtheorem{thm}{Theorem}
\newtheorem{cor}{Corollary}[section]
\newtheorem{conjecture}{Conjecture}
\newtheorem*{lacunarythm}{Calder\'on, Coifman--Weiss}
\newtheorem*{Gauss}{The Gauss bound}
\newtheorem*{Weil}{The Weil bound for Kloosterman sums}
\newtheorem*{MSW_transference}{Magyar--Stein--Wainger transference lemma}
\newtheorem*{approximationlemma}{The Approximation Formula}
\theoremstyle{remark}
\newtheorem{rem}{Remark}[section]
\newtheorem{question}{Question}[section]
\title{The discrete spherical averages over a family of sparse sequences}
\author{Kevin Hughes}
\email{Kevin.Hughes@ed.ac.uk}
\address{
School of Mathematics 
\\ 
The University of Edinburgh 
\\ 
James Clerk Maxwell Building 
\\ 
The King's Buildings 
\\ 
Peter Guthrie Tait Road 
\\ 
EDINBURGH 
\\ 
EH9 3FD 
Scotland, UK
}
\begin{document}
\maketitle
\tableofcontents

\begin{abstract}
We initiate the study of the $\ell^p(\Z^\dimension)$-boundedness of the arithmetic spherical maximal function over sparse sequences. 
We state a folklore conjecture for lacunary sequences, a key example of Zienkiewicz and prove new bounds for a family of sparse sequences that achieves the endpoint of the Magyar--Stein--Wainger theorem for the full discrete spherical maximal function in \cite{MSW_spherical}. 
Perhaps our most interesting result is the boundedness of a discrete spherical maximal function in \(\Z^4\) over an infinite, albeit sparse, set of radii. 
Our methods include the Kloosterman refinement for the Fourier transform of the spherical measure (introduced in \cite{Magyar_discrepancy}) and Weil bounds for Kloosterman sums which are utilized by a new further decomposition of spherical measure. 
\end{abstract}

% --
%\input{intro.tex}
% --
%
% ---
\section{Introduction}
% ---
% 
% ---
\subsection{Stein's spherical maximal function and its arithmetic analogue}
% ---
% 
In \cite{Stein_spherical}, Stein introduced the spherical maximal function and proved that it is bounded on $L^p(\R^\dimension)$ for $p>\frac{\dimension}{\dimension-1}$ and $\dimension \geq 3$. This was later extended to $p>2$ when $\dimension=2$ by Bourgain in \cite{Bourgain_circular}. 
% >>>
Recently, discrete analogues of Stein's spherical maximal function have been considered. 
The discrete sphere of radius $\radius \geq 0$ in $\Z^\dimension$ is 
$\arithmeticsphere := \{x\in \Z^\dimension : |x|^2=r^2 \}$ 
which contains $N_{\dimension}(\radius) = \# \arithmeticsphere$ lattice points. 
For dimensions $\dimension \geq 4$, the set $\arithmeticsphere$ is non-empty precisely when $\radius^2 \in \N$. 
Let $\acceptableradii$ denote the set of radii $\radius$ such that $\arithmeticsphere \not= \emptyset$, then $\acceptableradii$ is precisely $\setof{\radius \in \R_{\geq 0} : \radius^2 \in \N}$ when $\dimension \geq 4$. 
For $\radius \in \acceptableradii$, we introduce the discrete spherical averages: 
\begin{equation}
\arithmeticsphericalaveragefxn(x) 
= \inverse{N_{\dimension}(\radius)} \sum_{y \in \arithmeticsphere} \fxn(x-y) 
= \fxn \convolvedwith \arithmeticspheremeasure{\radius} (x)
\end{equation}
where $\arithmeticspheremeasure{\radius} := \frac{1}{N_\dimension(\radius)} {\bf 1}_{ \{x\in \Z^\dimension : |x|^\degreetwo=r^\degreetwo\} }$ is the uniform probability measure on $\arithmeticsphere$. 
The associated (full) maximal function is 
\begin{equation}
\arithmeticsphericalmaxfxn 
= \sup_{\radius \in \acceptableradii} \absolutevalueof{\arithmeticsphericalaveragefxn} 
. 
\end{equation}
Motivated by Stein's theorem, it is natural to ask: \emph{when is $\maxop$ bounded on $\ell^p(\Z^\dimension)$?} 
Testing the maximal operator on the delta function and using the asymptotics for the number of lattice points on spheres, $N_\dimension(\radius) \eqsim \radius^{\dimension-2}$ when $\dimension \geq 5$, we expect that the maximal operator is bounded on $\ell^p$ for $p>\frac{\dimension}{\dimension-2}$ when $\dimension \geq 5$. In fact, building on the work of \cite{Magyar_dyadic}, this was proven in \cite{MSW_spherical} with a subsequent restricted weak-type bound at the endpoint $p=\frac{\dimension}{\dimension-2}$ proven in \cite{Ionescu_spherical}. 
In particular, $\maxop$ is a bounded operator from $\ell^{p,1}(\Z^\dimension)$ to restricted $\ell^{p,\infty}(\Z^\dimension)$ for $p=\frac{\dimension}{\dimension-2}$; that is, $\maxop$ is restricted weak-type $(\frac{\dimension}{\dimension-2},\frac{\dimension}{\dimension-2})$. 
This result is sharp. 
For generalizations to higher degree varieties where the sharp ranges of \(\ell^{p,\infty}(\Z^\dimension)\) are unknown, we refer the reader to \cite{Magyar_ergodic} and \cite{Hughes_Vinogradov}.

% --
\subsection{The lacunary spherical maximal function and its arithmetic analogue}
% --
Shortly after Stein's work on the spherical maximal function \cite{Stein_spherical}, it was observed by Calder\'on and Coifman--Weiss that lacunary versions of Stein's spherical maximal function are bounded on a larger range of $L^p(\R^\dimension)$-spaces than for the full Stein spherical maximal function -- see \cite{Calderón_lacunary_spherical} and \cite{Coifman_Weiss_lacunary_spherical} respectively. In particular, they proved: 
\begin{lacunarythm}
The lacunary (continuous) spherical maximal function is bounded on $L^p(\R^\dimension)$ for $\dimension \geq 2$ and $1 < p \leq \infty$. 
\end{lacunarythm}

Similarly, we define the lacunary discrete spherical maximal function when $\dimension \geq 5$ by restricting the set of radii to lie in a lacunary sequence $\radii := \{ \radius_{j} \}_{j \in \N} \subset \acceptableradii$.
Recall that a sequence is lacunary if $\radius_{j+1} > c \, \radius_j$ for some $c > 1$. 
More generally, for any \(\radii \subseteq \acceptableradii\), the \emph{discrete spherical maximal function over $\radii$} is defined in the natural way as 
\begin{equation}
\sequentialarithmeticsphericalmaxfxn 
:= \sup_{\radius_j \in \radii} \; \absolutevalueof{\lacunaryavgfxn{\radius_j}} 
.
\end{equation}
By the Magyar--Stein--Wainger discrete spherical maximal theorem in \cite{MSW_spherical}, we know that any discrete spherical maximal function in 5 or more dimensions over a subsequence of radii in $\acceptableradii$ is bounded on $\ell^p(\Z^\dimension)$ for $\dimension \geq 5$ and $p > \frac{\dimension}{\dimension-2}$. In particular this holds true for any lacunary subsequence in 5 or more dimensions. 

% >>>
It is conjectured that the continuous lacunary spherical maximal function is bounded from $L^1(\R^\dimension)$ to $L^{1,\infty}(\R^\dimension)$ for $\dimension \geq 2$. 
See \cite{STW_endpoint_spherical} and \cite{STW_pointwise_lacunary} for recent work in this direction. 
Analogously, it is a folklore conjecture that the arithmetic lacunary spherical maximal function is bounded on $\ell^p(\Z^\dimension)$ for $p>1$.
The following conjecture is our motivation for this paper. 
\begin{conjecture}\label{conjecture:lacunary}
For $\dimension \geq 5$, if $\radii$ is a lacunary subsequence of $\acceptableradii$, then $\sequentialarithmeticsphericalmaxop: \ell^1(\Z^\dimension) \to \ell^{1,\infty}(\Z^\dimension)$.
% For $\dimension \geq 5$, if $\radii$ is a lacunary subsequence of $\acceptableradii$, then $\sequentialarithmeticsphericalmaxop: \ell^p(\Z^\dimension) \to \ell^p(\Z^\dimension)$ for \(p>1\). 
\end{conjecture}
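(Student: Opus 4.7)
The plan is to combine the Magyar--Stein--Wainger circle-method decomposition of the spherical measure with transference to the continuous lacunary spherical maximal function. First, I would invoke the approximation formula to split $\arithmeticFT{\arithmeticspheremeasure{\radius}}$ into a main term, given by a sum over Farey fractions $\Fareyfraction$ with denominators $\modulus \leq Q(\radius)$ of arithmetic weights $\eof{-\unit\radius^\degreetwo/\modulus}\,\Gausssum/\modulus^\dimension$ multiplying a localized piece of the continuous spherical kernel centered at $\Fareyfraction$, plus a global error term $\error$ whose Fourier multiplier decays like $\radius^{-\dyadicdecay}$ uniformly on $\torus$. The Gauss bound $|\Gausssum| \lesssim \modulus^{\dimension/2}$ shows that the arithmetic weight at each $\Fareyfraction$ is $\lesssim \modulus^{-\dimension/2}$.

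For the main term I would apply the Magyar--Stein--Wainger transference lemma at each modulus $\modulus$, reducing the arithmetic lacunary maximal function restricted to frequencies near the rationals $\{\Fareyfraction : \unit \in \unitsmod{\modulus}\}$ to the continuous lacunary spherical maximal function acting on a function in $\ell^1(\lattice)$. Granting the (still open) endpoint weak $(1,1)$ conjecture for the continuous lacunary spherical maximal function, transference would yield a weak $(1,1)$ bound for the $\modulus$-th piece with operator norm $\lesssim \totientfunction{\modulus}\,\modulus^{-\dimension/2}$. For $\dimension \geq 5$ this bound is summable in $\modulus$ (since $\totientfunction{\modulus}/\modulus^{\dimension/2} \lesssim \modulus^{1-\dimension/2} \leq \modulus^{-3/2}$), giving a weak $(1,1)$ estimate for the main term.

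The error term requires a separate argument. I would first control the $\ell^2 \to \ell^2$ norm of the square function $\left(\sum_j |E_{\radius_j}\fxn|^2\right)^{1/2}$ via Plancherel and the Fourier decay of $\error$; lacunarity of $\radii$ makes the resulting geometric sum in $\radius_j$ converge. To descend to the weak $(1,1)$ endpoint I would use a Calder\'on--Zygmund decomposition of $\fxn$, handling the good part via the $\ell^2$ bound and the bad part via pointwise kernel estimates on $E_{\radius_j}$. Here the Weil bound for Kloosterman sums, combined with the further decomposition of spherical measure promised in the abstract, would be used to refine $\error$ enough for viable kernel bounds.

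The main obstacle is that the continuous lacunary weak $(1,1)$ conjecture is itself open, so this scheme is inherently conditional unless one can exploit arithmetic structure to improve on the continuous situation. Even granting the continuous endpoint, aggregation over moduli at weak $(1,1)$ is delicate since the weak-type quasi-norm does not obey the triangle inequality: one must either interpolate from restricted weak-type bounds at each $\Fareyfraction$-localized piece or perform a single global Calder\'on--Zygmund decomposition adapted to the multi-scale Farey dissection, carefully controlling the interaction between different $\modulus$-scales at low frequencies. This $\modulus$-aggregation step, together with any genuinely unconditional progress on the continuous endpoint, is where I expect the bulk of the difficulty to reside.
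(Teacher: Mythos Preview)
The statement you are trying to prove is a conjecture that the paper does not prove; Section~\ref{subsection:Ackbar} reports that Zienkiewicz has shown it to be \emph{false} in general. There exist arbitrarily thin (in particular lacunary) $\radii \subset \acceptableradii$ for which $\sequentialarithmeticsphericalmaxop$ is unbounded on $\ell^p(\Z^\dimension)$ for every $1 \leq p < \frac{\dimension}{\dimension-1}$, so any purported proof must contain an error, independently of whether the continuous endpoint conjecture holds.

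The concrete gap in your scheme is the main-term bound. You claim transference gives a weak-$(1,1)$ bound of order $\totientfunction{\modulus}\,\modulus^{-\dimension/2}$ for the modulus-$\modulus$ piece, but the factor $\modulus^{-\dimension/2}$ is the Gauss bound on the \emph{multiplier} $K(\modulus,\radius^2;\latticepoint)$, i.e.\ an $\ell^2$ input; it does not descend to $\ell^1$. On the kernel side the arithmetic factor is the Ramanujan sum $c_\modulus(\radius^2-|x|^2)$ (see \eqref{eq:kernel_formula}), of size up to $\totientfunction{\modulus}$ with no power decay in $\modulus$, and the correct interpolated bound is $\modulus^{1-\dimension/p'}$ (Proposition~\ref{prop:Gauss_bound_for_multipliers}), which at $p=1$ is $\modulus$ and is not summable. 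More structurally, the phase $\eof{-\unit\radius^2/\modulus}$ varies with $\radius$, so you cannot pull the arithmetic weight outside the supremum and transfer to a single continuous lacunary maximal function; one is forced to split $\radii$ by the residue of $\radius^2 \bmod \modulus$ (Proposition~\ref{prop:counting_bound_for_maximal_multipliers}), and for a generic lacunary sequence the number of classes hit is comparable to $\modulus$. Zienkiewicz's counterexamples exploit exactly this: they are sequences for which $\#\{\radius_j^2 \bmod \oddprime\}$ is large for infinitely many primes, violating \eqref{good_primes_estimate}. This is why the paper's positive result, Theorem~\ref{thm:variants_of_Euclid}, requires the {\bf G}/{\bf B} dichotomy and why Conjecture~\ref{conjecture:lacunary} is replaced by Conjecture~\ref{conjecture:adelic_lacunary}.
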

% 
% <<<

% --
\subsection{It's a trap!}\label{subsection:Ackbar}
% --
Surprisingly, J. Zienkiewicz has shown that Conjecture~\ref{conjecture:lacunary} is false in general.\footnote{This counterexample was communicated to the author by Zienkiewicz after an initial draft of this paper was completed.} 
More precisely, Zienkiewicz proved that there exist infinite, yet arbitrarily thin subsets \(\radii \subset \acceptableradii\) such that \(\sequentialarithmeticsphericalmaxfxn\) is unbounded on \(\ell^p(\Z^\dimension)\) for \(1 \leq p < \frac{\dimension}{\dimension-1}\) and \(\dimension \geq 5\). 
Zienkiewicz's counterexamples proceed by a probabilistic argument that incorporates information about the discrete spherical averages when one reduces mod \(Q\) for \(Q \in \N\). 
% In effect, Zienkiewicz can always find spheres with many points on them mod \(Q\). 
By a probabilistic argument, Zienkiewicz constructs counterexamples that violate \eqref{good_primes_estimate} of {\bf G} below for infinitely many primes. 
In Section~\ref{section:conclusion} we revise Conjecture~\ref{conjecture:lacunary} to account for these counterexamples.

% --
\subsection{Results of this paper}
% -- 
% >>>
Our main theorem is the following improvement to the range of boundedess for maximal functions over lacunary sequences of radii possessing an interesting dichotomy. 
\begin{thm}\label{thm:variants_of_Euclid}
Let \(\radii := \setof{\radius_j} \subset \acceptableradii\) be a lacunary subsequence of \(\R^+\). 
Assume that \(\radii\) decomposes the primes \(\theprimes \subset \N\) into two (not necessarily disjoint) sets: 
the good primes \(\goodprimes\) and the bad primes \(\badprimes\) such that 
\begin{itemize}
\item[{\bf G}:]\label{item:good_primes_estimate} 
for each $\oddprime \in \goodprimes$, 
\begin{equation}\label{good_primes_are_units}
\setof{\radius_j^2 \mod \oddprime} 
\subset \unitsmod{\oddprime} , 
\end{equation}
and for all $\epsilon>0$, 
\begin{equation}\label{good_primes_estimate}
\# \setof{\radius_j^2 \mod \oddprime} 
% \lesssim \log \oddprime 
\lesssim_{\epsilon} \oddprime^\epsilon 
\end{equation}
where the implicit constants may depend on \(\epsilon\), but not on \(p \in \goodprimes\), 
\item[{\bf B}:]\label{item:bad_primes_estimate} 
and the bad primes satisfy 
\begin{equation}\label{bad_primes_estimate}
\sum_{\oddprime \in \badprimes} \oddprime^{-s} 
< 
\infty \text{ for some $s \in (0,1]$. } 
\end{equation}
\end{itemize}
If \( p \geq \frac{\dimension}{\dimension-(1+s)}\) and \(p > \frac{\dimension-1}{\dimension-2} \), then \(\sequentialarithmeticsphericalmaxop\) is a bounded operator on \(\ell^p(\Z^\dimension)\) for \(\dimension \geq 5\). 

If additionally \(2 \in \goodprimes\), then \(\sequentialarithmeticsphericalmaxop\) is bounded on \(\ell^p(\Z^\dimension)\) for the same range of \(p\) and \(\dimension \geq 4\). 
\end{thm}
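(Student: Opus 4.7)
The plan is to work within the Magyar--Stein--Wainger framework, refined by Magyar's Kloosterman decomposition from \cite{Magyar_discrepancy}. First, apply the approximation formula to write
\[
\arithmeticspheremeasure{\radius} = \sum_{\modulus} \Omega_\radius^{(\modulus)} + \error,
\]
where $\Omega_\radius^{(\modulus)}$ is the contribution of rationals \(\unit/\modulus\) with \((\unit,\modulus)=1\), assembled from the Gauss sum \(\twistedGausssum{\radius^2}\) modulating a localized continuous spherical measure, and \(\error\) is a Kloosterman error whose \(\ell^2\)-norm enjoys Weil-type savings. The maximal function \(\sup_j |\error_{\radius_j} \convolvedwith \fxn|\) is handled by a square-function/Plancherel argument driven by Weil's bound and lacunarity, and it is precisely this step that contributes the constraint \(p > (\dimension-1)/(\dimension-2)\); this part of the argument is independent of the good/bad decomposition.

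For the main term I would split each modulus multiplicatively as \(\modulus = \goodmodulus \cdot \badmodulus\) with \(\goodmodulus\) composed of primes in \(\goodprimes\) and \(\badmodulus\) of primes in \(\badprimes\). The pure good-prime contribution (\(\badmodulus=1\)) is where hypothesis \textbf{G} is used. The unit condition \eqref{good_primes_are_units} gives the full Gauss-sum bound \(|\twistedGausssum{\radius_j^2}|/\modulus^\dimension \lesssim \modulus^{-\dimension/2}\), while \eqref{good_primes_estimate} together with the Chinese Remainder Theorem yields \(\#\setof{\radius_j^2 \bmod \goodmodulus} \lesssim_\epsilon \goodmodulus^\epsilon\) for the squarefree parts that matter. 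Writing
\[
\sup_{\radius_j} \absolutevalueof{\Omega_{\radius_j}^{(\goodmodulus)} \convolvedwith \fxn} \leq \sum_{\lambda \in \setof{\radius_j^2 \bmod \goodmodulus}} \; \sup_{\radius_j:\, \radius_j^2 \equiv \lambda} \absolutevalueof{\Omega_{\radius_j}^{(\goodmodulus)} \convolvedwith \fxn},
\]
the arithmetic symbol is constant in each \(\lambda\)-class, so the Magyar--Stein--Wainger transference lemma reduces each inner supremum to the continuous lacunary spherical maximal function, which is bounded on \(L^p(\R^\dimension)\) for every \(p>1\) by Calder\'on/Coifman--Weiss. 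The \(\goodmodulus^{-\dimension/2}\)-decay absorbs both the \(\goodmodulus^\epsilon\)-loss from residue counting and the dimensional \(\ell^p\)-loss from transference, producing an absolutely convergent sum in \(\goodmodulus\).

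When \(\badmodulus \neq 1\), hypothesis \textbf{B} enters. Here I would apply the Weil bound to the Kloosterman sum \(\sum_{(\unit,\badmodulus)=1} \eof{-\unit \radius_j^2/\badmodulus} \cdot (\text{continuous symbol})\) to extract an extra \(\badmodulus^{-1/2+\epsilon}\)-gain beyond the elementary Gauss estimate. Multiplicativity converts \(\sum_{\oddprime \in \badprimes} \oddprime^{-s} < \infty\) into \(\sum_{\badmodulus} \badmodulus^{-s-\epsilon} < \infty\) via an Euler-product expansion, and balancing this summability against the \(\ell^p\)-loss from MSW transference yields precisely the endpoint \(p \geq \dimension/(\dimension-(1+s))\). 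The four-dimensional refinement under \(2 \in \goodprimes\) follows from the observation that the anomaly in the Gauss-sum bound at \(\oddprime=2\) which prevents the argument from closing at \(\dimension=4\) is eliminated once \(\radius_j^2\) is forced to be a unit mod \(2\), restoring the full \(\modulus^{-\dimension/2}\)-decay.

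The main obstacle, I expect, is implementing the MSW transference for the \emph{maximal} operator (not a single multiplier) uniformly in \(\radius_j\): one must track the dependence of the transferred \(\ell^p\)-constants on \(\modulus\) with enough precision to absolutely sum over \(\goodmodulus\) and \(\badmodulus\) simultaneously, and to balance Gauss/Weil savings against the Calder\'on--Coifman--Weiss input so that both stated endpoints \(p \geq \dimension/(\dimension-(1+s))\) and \(p > (\dimension-1)/(\dimension-2)\) are achieved.
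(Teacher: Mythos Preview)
Your overall architecture (approximation formula, lacunary square-function control of the error term, good/bad factorization of moduli) matches the paper, but the allocation of Gauss versus Weil bounds is reversed, and this is a genuine gap. The unit condition \eqref{good_primes_are_units} has nothing to do with the Gauss bound $|G(\unit,\modulus,\latticepoint)| \lesssim \modulus^{-\dimension/2}$, which holds unconditionally; its role is to kill the factor $(\modulus,\radius_j^2)^{1/2}$ in the Weil bound \eqref{eq:Weil_bound} for the \emph{Kloosterman} sum $K(\modulus,\radius_j^2;\latticepoint)$. Consequently, using only the Gauss bound on the good-prime part after splitting into residue classes yields a per-class maximal bound $\goodmodulus^{1-\dimension/p'}$ (this is exactly Proposition~\ref{prop:Gauss_bound_for_multipliers}), and multiplying by your $\goodmodulus^{\epsilon}$ residue count still only sums for $p>\dimension/(\dimension-2)$ --- the Magyar--Stein--Wainger range, not the improved $p>(\dimension-1)/(\dimension-2)$ you want. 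The residue-counting hypothesis \eqref{good_primes_estimate} is wasted unless it is paired with a \emph{single-radius} bound that beats Gauss, and that is precisely what Weil provides.

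Symmetrically, applying Weil to the bad-prime part fails on two counts: first, you have no unit condition there, so the gcd factor $(\badmodulus,\radius_j^2)^{1/2}$ can eat the entire $\badmodulus^{-1/2}$ gain; second, Weil is a bound for a fixed $\radius_j$, and passing to the supremum requires controlling $\#\{\radius_j^2 \bmod \badmodulus\}$, which hypothesis \textbf{B} says nothing about. The paper's proof does the opposite of what you propose: it applies the Weil bound (Proposition~\ref{prop:Weil_bound_for_a_multiplier}) to the operator $U^{1,\modulus}_{\radius_i}$ built from $\goodmodulus$, where \eqref{good_primes_are_units} gives full square-root cancellation and \eqref{good_primes_estimate} controls the number of residue classes; this yields the threshold $(\dimension-1)/(\dimension-2)$. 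For $\badmodulus$ it uses only the elementary Gauss-type maximal bound $\badmodulus^{1-\dimension/p'}$ (Proposition~\ref{prop:Gauss_bound_for_multipliers}), and summability over $\badmodulus$ then follows from \eqref{bad_primes_estimate} exactly when $1-\dimension/p' \leq -s$, i.e.\ $p \geq \dimension/(\dimension-(1+s))$.
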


Theorem~\ref{thm:variants_of_Euclid} reduces our problem to finding sequences of natural numbers satisfying certain arithmetic properties, and it would be superfluous if we could not find a sequence of radii satisfying the {\bf G} and {\bf B} dichotomy. 
Our next theorem gives a family of sequences satisfying these conditions. 
This family is well known in number theory as it includes \emph{primorials}, also known as \emph{Euclidean primes}, whose definition is motivated by Euclid's proof of the infinitude of primes. 
For these sequences, \eqref{good_primes_estimate} is simple to verify. 
However, \eqref{bad_primes_estimate} is difficult to verify, and we only have very poor bound for it in this article. 
In turn, for our family of sparse sequences, presently we are only able to show that the associated discrete spherical maximal function is strong-type at the Magyar--Stein--Wainger endpoint for the full discrete spherical maximal function as opposed to restricted weak-type bound in \cite{Ionescu_spherical}. 
% 
% The following family of examples is known to analytic number theorists. 
\begin{thm}\label{thm:ENFANT}
Let $\sequencegrowth>1$. 
% The sequence of radii $\radii = \setof{\radius_j \in \R^+ : \radius_j^2 = 1 + \prod_{i \leq 2^{j^\sequencegrowth}} \oddprime_i}$ satisfy (1??) and \(s=1\). 
% 
For any fixed \(m \in \N\), the sequence of radii $\radii = \setof{\radius_j \in \R^+ : \radius_j^2 = m + \prod_{j_0 \leq i \leq 2^{j^\sequencegrowth}} \oddprime_i}$ satisfy \eqref{good_primes_are_units} and \eqref{good_primes_estimate} of {\bf G} for all primes and \eqref{bad_primes_estimate} of {\bf B} for \(s=1\). 
\end{thm}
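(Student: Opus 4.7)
The strategy is to exhibit explicit candidates for $\goodprimes$ and $\badprimes$: set
$\badprimes := \{\oddprime \in \theprimes : \oddprime \mid m + P_j \text{ for some } j\}$, where $P_j := \prod_{j_0 \leq i \leq 2^{j^w}} \oddprime_i$, and $\goodprimes := \theprimes \setminus \badprimes$. With this choice, \eqref{good_primes_are_units} is immediate: for $\oddprime \in \goodprimes$, $\oddprime \nmid r_j^2$ for every $j$, so $r_j^2 \bmod \oddprime \in \unitsmod{\oddprime}$.

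For \eqref{good_primes_estimate}, the key observation is that the residue $r_j^2 \bmod \oddprime$ stabilizes once $\oddprime$ appears in the primorial-like product. Fix a prime $\oddprime$ and let $J(\oddprime)$ be the least $j$ such that $\oddprime \leq \oddprime_{2^{j^w}}$. For every $j \geq J(\oddprime)$ we have $\oddprime \mid P_j$, so $r_j^2 = m + P_j \equiv m \pmod{\oddprime}$. Hence $\{r_j^2 \bmod \oddprime\}$ contains at most $J(\oddprime) + 1$ distinct residues. By the Prime Number Theorem, $J(\oddprime) \asymp (\log \oddprime)^{1/w}$, which is $\lesssim_\epsilon \oddprime^{\epsilon}$ for every $\epsilon > 0$.

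The main technical step is \eqref{bad_primes_estimate} with $s=1$. Decompose $\badprimes = \badprimes_0 \cup \bigcup_j \badprimes_j$, where $\badprimes_0$ is the finite set of prime divisors of $m$, and $\badprimes_j$ is the set of bad primes outside $\badprimes_0$ that first appear as divisors of $m + P_j$. If $\oddprime \in \badprimes_j$ and $\oddprime \leq \oddprime_{2^{j^w}}$, then $\oddprime \mid P_j$, hence $\oddprime \mid (m + P_j) - P_j = m$, placing $\oddprime$ in $\badprimes_0$: a contradiction. Thus every $\oddprime \in \badprimes_j$ satisfies $\oddprime > \oddprime_{2^{j^w}}$. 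Since the primes in $\badprimes_j$ are distinct and each divides $m + P_j$,
\[ \#\badprimes_j \cdot \log \oddprime_{2^{j^w}} \;\leq\; \sum_{\oddprime \in \badprimes_j} \log \oddprime \;\leq\; \log(m + P_j). \]
Chebyshev's theorem gives $\log P_j = \vartheta(\oddprime_{2^{j^w}}) \asymp \oddprime_{2^{j^w}} \asymp 2^{j^w} j^w$ and $\log \oddprime_{2^{j^w}} \asymp j^w$, whence $\#\badprimes_j \lesssim 2^{j^w}$. Each $\oddprime \in \badprimes_j$ is $\gtrsim 2^{j^w} j^w$, so
\[ \sum_{\oddprime \in \badprimes_j} \oddprime^{-1} \;\lesssim\; \frac{2^{j^w}}{2^{j^w} j^w} \;=\; j^{-w}. \]
Summing over $j$ yields $\sum_{\oddprime \in \badprimes} \oddprime^{-1} < \infty$, precisely because $w > 1$.

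The central obstacle lies in \eqref{bad_primes_estimate}: both the count $\#\badprimes_j \lesssim 2^{j^w}$ and the pointwise lower bound $\oddprime \gtrsim \oddprime_{2^{j^w}}$ are essentially the worst-case estimates available without finer arithmetic information on the factorizations of $m + P_j$. It is the super-polynomial growth encoded by $w > 1$ that tips the resulting series $\sum_j j^{-w}$ into convergence; any attempt to weaken the sparsity would demand non-trivial control over the large prime factors of $m + P_j$.
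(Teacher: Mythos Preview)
Your proof is correct and follows essentially the same approach as the paper: the same split into good and bad primes, the same stabilization argument for \eqref{good_primes_estimate} (the paper records the cruder bound $J \lesssim \log \oddprime$ rather than your $J \asymp (\log \oddprime)^{1/w}$, but either suffices), and the same counting estimate $\#\badprimes_j \lesssim 2^{j^w}$ together with the lower bound $\oddprime > \oddprime_{2^{j^w}}$ to get $\sum_{\oddprime \in \badprimes_j} \oddprime^{-1} \lesssim j^{-w}$. Your explicit separation of $\badprimes_0 = \{\oddprime : \oddprime \mid m\}$ is a small but necessary addition for general $m$, which the paper's proof (written only for $m=1$) does not need.
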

Theorem~\ref{thm:variants_of_Euclid} and Theorem~\ref{thm:ENFANT} immediately combine to yield the following endpoint Magyar--Stein--Wainger theorem applied to such sequences. 
\begin{cor}\label{cor:Euclid}
Let \(\dimension \geq 4\), \(\sequencegrowth > 1\) and \(\radii = \setof{\radius_j \in \R^+ : \radius_j^2 = 1 + \prod_{i \leq 2^{j^\sequencegrowth}} \oddprime_i}\) where \(\oddprime_i\) is the \(i^{th}\) prime. 
Let \(\sequentialarithmeticsphericalmaxop\) denote the spherical maximal function associated to $\radii$. Then \(\sequentialarithmeticsphericalmaxop\) is a bounded operator on \(\ell^p(\Z^\dimension)\) for \(p \geq \frac{\dimension}{\dimension-2}\) and \(\dimension \geq 4\). 
\end{cor}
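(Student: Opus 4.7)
The plan is to verify directly that the hypotheses of Theorem~\ref{thm:variants_of_Euclid} are fulfilled by the given sequence and then read off the range of $p$. Since the corollary claims to combine the two preceding theorems, the argument is essentially a bookkeeping check; the one point that must be handled with care is confirming that the parameter identifications $m=1$, $s=1$, and $2 \in \goodprimes$ are all consistent with the hypotheses being invoked.

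The first step is to apply Theorem~\ref{thm:ENFANT} with $m=1$ to $\radii = \setof{\radius_j : \radius_j^2 = 1 + \prod_{i \leq 2^{j^\sequencegrowth}}\oddprime_i}$. This immediately provides that \eqref{good_primes_are_units} and \eqref{good_primes_estimate} of {\bf G} hold for every prime, and \eqref{bad_primes_estimate} of {\bf B} holds with $s=1$. In particular, $2$ may be placed in $\goodprimes$, which will be needed in order to reach dimension $\dimension = 4$ in the final step.

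The second step is to feed this data into Theorem~\ref{thm:variants_of_Euclid}. Taking $s=1$, the required range becomes $p \geq \frac{\dimension}{\dimension-(1+s)} = \frac{\dimension}{\dimension-2}$; the supplementary condition $p > \frac{\dimension-1}{\dimension-2}$ is automatic for $\dimension \geq 4$ since $\frac{\dimension-1}{\dimension-2} < \frac{\dimension}{\dimension-2}$. Because $2 \in \goodprimes$, the strengthened $\dimension \geq 4$ clause of Theorem~\ref{thm:variants_of_Euclid} applies, yielding that $\sequentialarithmeticsphericalmaxop$ is bounded on $\ell^p(\Z^\dimension)$ for all $p \geq \frac{\dimension}{\dimension-2}$ and $\dimension \geq 4$, which is exactly the stated conclusion.

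There is no genuine obstacle at the level of the corollary: all of the analytic difficulty has been carried out in Theorem~\ref{thm:variants_of_Euclid} (which uses the Kloosterman refinement of the spherical Fourier transform and the Weil bound to convert the bad-prime decay into an $\ell^p$-estimate) and all of the number-theoretic difficulty is contained in Theorem~\ref{thm:ENFANT} (which handles the primorial-type arithmetic and in particular produces the value $s=1$). The only substantive remark worth including in the writeup is the inequality $\frac{\dimension-1}{\dimension-2} < \frac{\dimension}{\dimension-2}$ valid for $\dimension \geq 4$, so that no hidden lower bound on $p$ stronger than $\frac{\dimension}{\dimension-2}$ sneaks into the conclusion.
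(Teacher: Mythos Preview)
Your proposal is correct and matches the paper's approach exactly: the paper states that the corollary follows because ``Theorem~\ref{thm:variants_of_Euclid} and Theorem~\ref{thm:ENFANT} immediately combine,'' and your write-up supplies precisely the bookkeeping that makes this combination explicit (including the observation that $\frac{\dimension-1}{\dimension-2} < \frac{\dimension}{\dimension-2}$ so the second constraint is redundant). The only hypothesis of Theorem~\ref{thm:variants_of_Euclid} you do not mention is that $\radii$ be lacunary, but this is clear since $\radius_j^2 \eqsim e^{2^{j^\sequencegrowth}}$ by the Prime Number Theorem (as the paper notes in the remark following the corollary).
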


\begin{rem}
By the Prime Number Theorem, $\prod_{i \leq T} \oddprime_i \eqsim e^{T}$ as $T \to \infty$. 
We see that our sequence grows much faster than lacunary since \(\prod_{i \leq 2^{j^\sequencegrowth}} \oddprime_i \eqsim e^{2^{j^\sequencegrowth}}\) for any $\sequencegrowth > 0$ as $j \to \infty$. 
The existence of thicker sequences with property \eqref{good_primes_estimate} would be interesting. 
On the other hand, our main difficulty in this paper is to establish \eqref{bad_primes_estimate}. 
We only succeed in doing so for $s=1$; hence the limitation to \(p \geq \frac{\dimension}{\dimension-2}\) in Corollary~\ref{cor:Euclid}. 
% Establishing \eqref{bad_primes_estimate} for some $s \in (0,1)$ for our sequences would automatically improve the range of $\ell^p(\Z^\dimension)$-boundedness to $p > \max \{ \frac{\dimension-1}{\dimension-2}, \frac{\dimension}{\dimension-(s+1)} \}$. 
\end{rem}
% <<<

% >>>
An intriguing aspect of Corollary~\ref{cor:Euclid} is that \(\sequentialarithmeticsphericalmaxop\) is bounded on $\ell^2(\Z^4)$. 
This is surprising since the full discrete spherical maximal function, $\arithmeticsphericalmaxop$ fails to be bounded on $\ell^2(\Z^4)$. 
Worse yet, for dimensions $\dimension \leq 4$, the full maximal function is only bounded on $\ell^\infty(\Z^\dimension)$. 
Theorem~\ref{thm:variants_of_Euclid} and Corollary~\ref{cor:Euclid} mark the first results in 4 dimensions for boundedness of the arithmetic spherical maximal function over infinite sequences. 

Let us examine the four dimensional situation further. 
In $\Z^4$, there are precisely 24 lattice points on a sphere of radius $2^j$ for all $j \in \N$, e.g. $N_{4}(2^j) = 24$. 
% While in 3 dimensions, any sphere $S^2(\radius)$ such that $\radius^2 \equiv 7 \mod{8}$ is empty. 
Applying the discrete spherical maximal function to the delta function demonstrates that the naive definition of our maximal function in 4 dimensions is wrong. However, further considerations suggest that there could be a version of the Magyar--Stein--Wainger theorem in 4 dimensions. 
To make this precise, we must account for some arithmetic phenomena. 
From the work of Hardy--Littlewood on the circle method, we have the asymptotic formula 
\begin{equation}\label{eq:HL_asymptotic}
N_\dimension(\radius) 
= \singularseries{\radius^2} \frac{\pi^{\dimension/2}}{\Gamma(\dimension/2)} \radius^{\dimension-2} +O_{\epsilon}(\radius^{\dimension/2+\epsilon})
\end{equation}
where $\singularseries{\radius^2}$ is the singular series, which satisfies $\singularseries{\radius^2} \eqsim 1$ when $\dimension \geq 5$. 
Lagrange's theorem and Jacobi's four square theorem demonstrate that the 4 dimensional case (i.e. \(S^3(\radius) \in \Z^4\)) is different. 
In four dimensions, the bound for the error term in \eqref{eq:HL_asymptotic} dominates the main term, and therefore, \eqref{eq:HL_asymptotic} is not useful as an asymptotic. 
% Examining the error term in \eqref{eq:HL_asymptotic}, we see that their theorem does not cover the 4 dimensional case. 
However, Kloosterman was able to refine their method by exploiting oscillation between Gauss sums to improve \eqref{eq:HL_asymptotic} to 
\begin{equation}\label{equation:Kloosterman}
N_\dimension(\radius) 
= \singularseries{\radius^2} \frac{\pi^{\dimension/2}}{\Gamma(\dimension/2)} \radius^{\dimension-2} +O_{\epsilon}(\radius^{\frac{\dimension}{2}-\frac{1}{9}+\epsilon})
\end{equation}
for all $\epsilon > 0$ and $\dimension \geq 4$. 
The cost here is that the singular series in the asymptotic formula is not uniform, and in fact can be very small. One can predict this from Jacobi's theorem since there are precisely 24 lattice points when $\radius = 2^j$ for all $j \in \N$; in this case, one sees that $\singularseries{4^j} \lesssim 4^{-j}$. 
To avoid this 2-adic obstruction in the singular series when $\dimension = 4$, we make the additional assumption that \emph{$\radius_j^2 \not\equiv 0 (\mod 4)$ for each $j \in \N$} or \eqref{good_primes_are_units} holds for the prime 2. In either case $N_4(\radius^2) \eqsim \radius^2$ so that there are many lattice points on $S^3(\radius)$. Modifying the discrete spherical maximal function in 4 dimensions in this way, it is natural to conjecture that it is bounded on $\ell^p(\Z^4)$ for $2 < p \leq \infty$ -- see \cite{Hughes_thesis} for a precise statement of this conjecture and a related result.

% --
\subsection{Notations}\label{subsection:notations}
% --
% \input{notations.tex}
% 
Our notation is a mix of notations from analytic number theory and harmonic analysis. Most of our notation is standard, but there are a few choices based on aesthetics. 

\begin{itemize}
\item The torus $\T^\dimension$ may be identified with any box in $\euclideanspace$ of sidelengths 1, for instance $[0,1]^\dimension$ or $[-1/2,1/2]^\dimension$.
% 
% \item We use the non-standard notation of $\Zmod{\modulus} = \Z/ \modulus \Z$ which we identify with the set $\inbraces{1, \cdots, \modulus}$ and $\unitsmod{\modulus}$ is the group of units in $\Zmod{\modulus}$. 
% 
\item We identify $\Zmod{\modulus}$ with the set $\inbraces{1, \dots, \modulus}$ and $\unitsmod{\modulus}$ is the group of units in $\Zmod{\modulus}$, also considered as a subset of $\inbraces{1, \dots, \modulus}$. 
\item $\eof{t}$ will denote the character $e^{2 \pi i t}$ for $t \in \R, \Zmod{\modulus}$ or $\T$.
\item We abuse notation by writing ${b}^2$ to mean $\sum_{i=1}^{\dimension} b_i^2$ for $b \in (\Zmod{\modulus})^{\dimension}$ and the dot product notation $b \cdot m$ to mean $\sum_{i=1}^{\dimension} b_i m_i$ for $b, m \in (\Zmod{\modulus})^\dimension$ or \(\Z^\dimension\). 
\item For any $\modulus \in \N$, $\totientfunction{\modulus}$ will denote Euler's totient function, the size of $\unitsmod{\modulus}$. 
\item For two functions $f, g$, $f \lesssim g$ if $\absolutevalueof{f(x)} \leq C \absolutevalueof{g(x)}$ for some constant $C>0$. 
% $f(x) \lessapprox x^n g(x)$ if $\absolutevalueof{f(x)} \leq C_\epsilon |x|^{n+\epsilon} |g(x)|$ for each $\epsilon >0$ with $C_{\epsilon}$ depending on $\epsilon$. 
$f$ and $g$ are comparable $f \eqsim g$ if $f \lesssim g$ and $g \lesssim f$. 
% Finally, we may use $f \ll g$ if $|f|$ is much smaller than $|g|$. 
All constants throughout the paper may depend on dimension $\dimension$. 
\item 
If $f: \R^\dimension \to \C$, then we define its Fourier transform by $\contFT{\fxn}(\xi) := \int_{\R^\dimension} f(x) e(x \cdot \xi) dx$ for $\xi \in \R^\dimension$. 
If $f: \T^\dimension \to \C$, then we define its Fourier transform by $\torusFT{\fxn}(\latticepoint) := \int_{\T^\dimension} f(x) e(-\latticepoint \cdot x) dx$ for $\latticepoint \in \lattice$. 
If $f:\lattice \to \C$, then we define its inverse Fourier transform by $\latticeFT{\fxn}(\xi) := \sum_{\latticepoint \in \lattice} f(\latticepoint) e(n \cdot \xi)$ for $\xi \in \T^\dimension$. 
\item $\ellpoperatornorm{T}$ will denote the $\ell^p(\Z^\dimension)$ to $\ell^p(\Z^\dimension)$ operator norm of the operator $T$. 
\end{itemize}

% --
\subsection{Layout of the paper}
% --
By interpolation with the usual $\ell^\infty(\Z^\dimension)$ bound, we restrict our attention to the range $1 \leq p \leq 2$. 
From \cite{MSW_spherical}, we understand that each average decomposes into a main term (resembling the singular series and singular integral of the circle method) and an error term. We recall this machinery in section~\ref{section:MSW_machinery}. 
The main term  and error term will be bounded on ranges of $\ell^p(\Z^\dimension)$-spaces by distinct arguments. 
In section~\ref{section:main_term}, our bounds for the main term exploit the Weil bounds for Kloosterman sums via the transference principle of Magyar--Stein--Wainger. 
The main result here is Lemma~\ref{lemma:main_term} and we introduce a more precise decomposition of the multipliers in order to use the Kloosterman method. 
In section~\ref{section:error_term}, the error term is handled by a square function argument using the lacunary condition. The main lemma here is Lemma~\ref{lemma:error_term_lemma}. Our novelty here is that we exploit (well-known) cancellation for averages of Ramanujan sums to improve the straight-forward $\ell^1(\Z^\dimension)$ bound. 
Theorem~\ref{thm:variants_of_Euclid} follows immediately by combining Lemma~\ref{lemma:main_term} with Lemma~\ref{lemma:error_term_lemma}. 
In section~\ref{section:ENFANT_example}, we prove Theorem~\ref{thm:ENFANT}. 
The properties of our sequences are well known to analytic number theorists, but we could not find them in the literature. 
Section~\ref{section:conclusion} concludes our paper with some questions and remarks.

% --
%\input{MSW_machinery.tex}
% --
%
% ---
\section{MSW machinery and the Kloosterman refinement}\label{section:MSW_machinery}
% --- 
Before turning to the proof of Theorem~\ref{thm:variants_of_Euclid}, we review the Kloosterman refinement as in (1.9) of Lemma~1 from \cite{Magyar_discrepancy}, some machinery from \cite{MSW_spherical} and bounds for exponential sums. 
% In particular, we recall the transference principle of Magyar--Stein--Wainger. 
% 

Let $\arithmeticspheremeasure{\radius} := \frac{1}{N_\dimension(\radius)} \charfxn{\setof{\latticepoint \in \lattice : \absolutevalueof{\latticepoint} = \radius}}$ denote the normalized surface measure on the sphere of radius $\radius$ centered at the origin for some $\radius \in \radii$. The circle method of Hardy--Littlewood, and of Kloosterman yields $N_\dimension(\radius) \eqsim \radius^{\dimension-2}$ for $\radius \in \acceptableradii$ when $\dimension \geq 5$ and for $\radius^2 \not\equiv 0 \mod 4$ when $\dimension=4$, so we renormalize our spherical measure to 
\begin{equation}%\label{eq:discrete_spherical_normalization}
\arithmeticspheremeasure{\radius} 
:= \radius^{2-\dimension} \cdot \charfxn{\setof{\latticepoint \in \lattice : \absolutevalueof{\latticepoint} = \radius}} 
. 
\end{equation}
Note that our subsequences of radii $\radii$ exclude the case $\radius^2 \equiv 0 \mod 4$ when $\dimension=4$, so that we may renormalize in this case when 2 is a good prime; that is, 2 satisfies \eqref{good_primes_are_units} of {\bf G }. 
Furthermore, we renormalize our averages and maximal function accordingly. 
% Then $\arithmeticsphericalaveragefxn = \arithmeticspheremeasure{\radius} \convolvedwith \fxn$. 
Using Heath-Brown's version of the Kloosterman refinement to the Hardy--Littlewood--Ramanujan circle method from \cite{HB_cubic_forms}, Magyar gave an approximation formula generalizing \eqref{equation:Kloosterman} for $\arithmeticspheremeasure{\radius}$ in \cite{Magyar_discrepancy}. 
We recall this now:
\begin{approximationlemma}
If $\dimension \geq 4$, then for each $\radius \in \acceptableradii$, 
\begin{equation}\label{eq:Kloosterman_approximation}
\arithmeticFT{\arithmeticspheremeasure{\radius}}(\toruspoint) 
= 
\sum_{\modulus=1}^{\radius} \sum_{\latticepoint \in \lattice} K(\modulus,\radius^2;\latticepoint) \Psi(\modulus \toruspoint - \latticepoint) \contFT{d\spheremeasure_\radius}(\toruspoint - \latticepoint/\modulus) + \arithmeticFT{\error}(\toruspoint)
\end{equation}
with error term, $\error$ that is the convolution operator given by the multiplier $\arithmeticFT{\error}$, satisfying 
\begin{equation}\label{eq:Kloosterman_error_bound}
\norm{\error \fxn}_{\ell^2(\lattice)} \lesssim_\epsilon \radius^{2-\frac{\dimension+1}{2}+\epsilon} \norm{\fxn}_{\ell^2(\lattice)} 
\end{equation}
for any $\epsilon > 0$. 
\end{approximationlemma}
Here and throughout, for $\modulus, N \in \N$ and $\latticepoint \in \Z^\dimension$, 
\begin{equation}
K(\modulus, N; \latticepoint) 
:= \modulus^{-\dimension} \sum_{\unit \in \unitsmod{\modulus}} \eof{ -\frac{\unit N}{\modulus} } \sum_{b \in \inparentheses{\Zmod{\modulus}}^\dimension} \eof{\frac{\unit b^\degreetwo + b \cdot \latticepoint}{\modulus}} 
\end{equation}
are \emph{Kloosterman sums}, $\Psi$ is a smooth function supported in $[-1/4, 1/4]^\dimension$ and equal to 1 on $[-1/8, 1/8]^\dimension$. 
Our Kloosterman sums arise naturally in Waring's problem as a weighted sum of the Gauss sums 
\begin{equation}
\twistedGausssum{\latticepoint} 
:= \modulus^{-\dimension} \sum_{b \in \inparentheses{\Zmod{\modulus}}^\dimension} \eof{\frac{\unit b^\degreetwo + b \cdot \latticepoint}{\modulus}} 
\end{equation}
($\modulus \in \N$, $\unit \in \unitsmod{\modulus}$ and $\latticepoint \in \Z^\dimension$) 
so that $K(\modulus, N; \latticepoint) = \sum_{\unit \in \unitsmod{\modulus}} \eof{ -\frac{\unit N}{\modulus} } \twistedGausssum{\latticepoint}$. 
% These are the \(\Zmod{\modulus}\)-Fourier transform of the analoguous spherical measure in \(\Zmod{\modulus}^\dimension\). 
% 
\(d\spheremeasure_\radius\) denotes the induced Lebesgue measure on the sphere of radius \(\radius\) in \(\R^\dimension\) normalized so that the total surface measure is $\pi^{\dimension/2} / \Gamma(\dimension/2)$ for each $\radius > 0$. 
Note that this spherical measure is also the restriction of the Gelfand--Leray form to the sphere of radius \(\radius\), or the Dirac delta measure; both with the appropriate normalization.
One may take $\toruspoint=0$ to check that \eqref{eq:Kloosterman_approximation} is compatible with \eqref{eq:HL_asymptotic} (keep in mind our renormalization). 
\begin{rem}
The bound for the error term in \eqref{eq:Kloosterman_error_bound} was obtained with a weaker exponent of $2-\frac{\dimension}{2}-\frac{1}{9}+\epsilon$ in place of $2-\frac{\dimension}{2}-\frac{1}{2}+\epsilon$ for the dyadic maximal function version in \cite{Hughes_thesis} by extending Kloosterman's original method in \cite{Kloosterman} while Magyar achieved the (presumably optimal) savings of \eqref{eq:Kloosterman_error_bound} using Heath-Brown's method in \cite{HB_cubic_forms}. Alternately, Heath-Brown's method in \cite{HB_quadratic_forms} achieves \eqref{eq:Kloosterman_error_bound}. 
\end{rem}
% 

% --
% \subsection{The Magyar--Stein--Wainger transference principle}
% --
With The Approximation Formula in mind, it is necessary to understand the relationship between multipliers defined on $\torus$ and $\euclideanspace$. Suppose that $\mu$ is a multiplier supported in $[-1/2,1/2]^\dimension$, then we can think of $\mu$ as a multiplier on $\euclideanspace$ or $\torus$; denote these as $\mu_{\euclideanspace}$ and $\mu_{\torus}$ respectively where $\mu_{\torus}(\toruspoint):= \sum_{\latticepoint \in \lattice} \mu(\toruspoint-\latticepoint)$ is the periodization of $\mu_{\euclideanspace}$. These have convolution operators $T_{\euclideanspace}$ and $T_{\torus}$ on their respective spaces. 
Explicitly, for $F:\euclideanspace \to \C$,
\[
T_{\euclideanspace}F(\spacepoint) := \int_{\euclideanspace} \mu_{\euclideanspace}(\freqpoint) \contFT{F}(\freqpoint) \eof{-\spacepoint \cdot \freqpoint} \, d\freqpoint
\]
and for $f:\lattice \to \C$,
\[
T_{\lattice} f(\latticepoint) := \int_{\torus} \mu_{\torus}(\toruspoint) \arithmeticFT{f}(\toruspoint) \eof{-\latticepoint \cdot \toruspoint} \, d\toruspoint .
\]
% 
% Equivalently, let $K_{\euclideanspace}$ be the kernel of $T_{\euclideanspace}$, 
% \[
% K_{\euclideanspace}(\spacepoint) = \int_{\euclideanspace} \mu_{\euclideanspace}(\freqpoint) \eof{\spacepoint \cdot \freqpoint} \, d\freqpoint
% \]
% and $K_{\lattice}$ be the kernel of $T_{\lattice}$, 
% \[
% K_{\lattice}(\latticepoint) = \int_{\torus} \mu_{\torus}(\toruspoint) \eof{\latticepoint \cdot \freqpoint} \, d\freqpoint .
% \]
% Then $K_{\euclideanspace}$ is smooth, and $K_{\lattice}$ is $K_{\euclideanspace}\rvert_{\lattice}$, the restriction of $K_{\euclideanspace}$ to the lattice $\lattice$.
% 
We will need apply these to maximal functions, so we extend these notions to Banach spaces. 
Let $B_1, B_2$ be two finite dimensional Banach spaces with norms $\norm{\cdot}_1, \norm{\cdot}_2$, and $\mathcal{L}(B_1,B_2)$ is the space of bounded linear tranformations from $B_1$ to $B_2$. Let $\ell^p_{B_i}$ be the space of functions $f:\lattice \to B_i$ such that $\sum_{\latticepoint \in \lattice} \norm{f}_{i}^p < \infty$ and $L^p_{B_i}$ be the space of functions $F:\euclideanspace \to B_i$ such that $\int_{\euclideanspace} \norm{F}_{i}^p < \infty$. 
For a fixed modulus $\modulus \in \N$, suppose that $\mu: [-1/2\modulus,1/2\modulus]^\dimension \to \mathcal{L}(B_1,B_2)$ is a multiplier with convolution operators $T_{\euclideanspace}$ on $\euclideanspace$ and $T_{\lattice}$ on $\lattice$. Extend $\mu$ periodically to the torus to define 
$\mu^{\modulus}_{\torus}(\freqpoint) := \sum_{\latticepoint \in \lattice} \mu_{\torus}(\freqpoint-\latticepoint/\modulus)$ 
with convolution operator $T^{\modulus}_{\lattice}$ on $\lattice$ defined by $\arithmeticFT{T^{\modulus}_{\lattice}f}(\freqpoint) = \mu^{\modulus}_{\torus}(\freqpoint) \cdot \arithmeticFT{f}(\freqpoint)$. 
Magyar--Stein--Wainger proved a transference principle which relates the boundedness of $T_{\R^\dimension}$ to that of $T_{\Z^\dimension}^{\modulus}$ for any finite dimensional Banach space. 
The following transference principle is Proposition~2.1 in \cite{MSW_spherical}: 
\begin{MSW_transference} % \label{lemma:transference_lemma}
For $1 \leq p \leq \infty$,
\begin{equation}\label{tranference_principle}
\norm{T^{\modulus}_{\lattice}}_{\ell^p_{B_1} \to \ell^p_{B_2}} 
\lesssim \norm{T_{\euclideanspace}}_{L^p_{B_1} \to L^p_{B_2}} .
\end{equation}
The implicit constant is independent of $B_1,B_2,p$ and $\modulus$.
\end{MSW_transference}

We will apply this lemma with $B_1 = B_2 = \ell^\infty(\N)$ in order to compare averages over the discrete spherical maximal function with known bounds for averages over the continuous lacunary spherical maximal function. Technically, we should truncate the maximal function and apply the lemma with $B_1 = B_2 = \ell^\infty(\setof{1, \dots, N})$ for arbitrarily large $N \in \N$ with bounds independent of \(N\). However, this is a standard technique that we will not emphasize. 
% More precisely, we use $\ell^{\infty}(\setof{n \leq N})$ with bounds that are independent of $N$ and then extend these to $\ell^\infty(\N)$ by a monotone convergence argument. 

%---
% \subsection{An $\ell^p$ inequality for $1/\modulus$-periodic multipliers on the torus}
%---

The Magyar--Stein--Wainger transference lemma allows us to utilize our understanding of the continuous theory for spherical averages and reduces our problem to understanding the arithmetic aspects of the multipliers $\sum_{\latticepoint \in \lattice} K(\modulus,\radius^2;\latticepoint) \Psi(\modulus \toruspoint - \latticepoint) \contFT{d\spheremeasure}(\radius(\toruspoint - \latticepoint/\modulus))$ for each $\modulus$. To handle these we recall Proposition~2.2 in \cite{MSW_spherical}: 

\begin{lemma}[Magyar--Stein--Wainger]\label{lemma:MSW_periodic_inequality}
Suppose that $\mu(\toruspoint) = \sum_{\latticepoint \in \lattice} g({\latticepoint}) \bumpfxn(\toruspoint - \latticepoint/\modulus)$ is a multiplier on $\torus$ where $\bumpfxn$ is smooth and supported in $[-1/2\modulus,1/2\modulus]^\dimension$ with convolution operator $T$ on $\lattice$. Furthermore, assume that $g({\latticepoint})$ is $\modulus$-periodic ($g(\latticepoint_1) = g(\latticepoint_2)$ if $\latticepoint_1 \equiv \latticepoint_2 \mod{\modulus}$). For a $q$-periodic sequence, define the \((\Zmod{\modulus})^\dimension\)-Fourier transform $\arithmeticFT{g}(\latticepoint) := \sum_{b \in \inparentheses{\Zmod{\modulus}}^\dimension} g(b) e\left( \frac{\latticepoint \cdot b}{\modulus} \right)$. Then for $1\leq p \leq 2$,
\begin{equation}\label{eq:MSW_periodic_inequality}
\norm{T}_{\ell^p(\lattice) \to \ell^p(\lattice)}
\lesssim \left( \sup_{m \in \inparentheses{\Zmod{\modulus}}^\dimension} \lvert g({m}) \rvert \right)^{2-2/p} \left( \sup_{n \in \inparentheses{\Zmod{\modulus}}^\dimension} \lvert \arithmeticFT{g}({n}) \rvert \right)^{2/p-1} 
\end{equation}
with implicit constants depending on $\bumpfxn$ and $p$, but independent of $g$. 
\end{lemma}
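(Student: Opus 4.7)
The plan is to prove the bound by Riesz--Thorin interpolation between endpoint estimates at $p=2$ and $p=1$, each derived directly from the structure of $\mu$.

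First I would treat the $\ell^2$ endpoint. By Plancherel, $\|T\|_{\ell^2 \to \ell^2} = \|\mu\|_{L^\infty(\T^\dimension)}$. Since $\bumpfxn$ is supported in $[-1/2\modulus, 1/2\modulus]^\dimension$, the translates $\bumpfxn(\toruspoint - \latticepoint/\modulus)$ for $\latticepoint$ ranging over a set of representatives of $(\Zmod{\modulus})^\dimension$ have pairwise disjoint supports on $\torus$. Combined with the $\modulus$-periodicity of $g$, this gives
\[
\|\mu\|_{L^\infty(\T^\dimension)} \;\leq\; \|\bumpfxn\|_\infty \cdot \sup_{\latticepoint \in (\Zmod{\modulus})^\dimension} |g(\latticepoint)|.
\]

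Next I would address the $\ell^1$ endpoint by computing the convolution kernel of $T$ directly. Writing $K(\latticepoint) = \int_{\torus} \mu(\toruspoint) \eof{-\latticepoint \cdot \toruspoint} \, d\toruspoint$, grouping the sum defining $\mu$ by residue classes of $\latticepoint$ modulo $\modulus$, and unfolding the resulting periodization into an integral over $\R^\dimension$ (valid because $\bumpfxn$ has compact support), I would arrive at the identity
\[
K(\latticepoint) \;=\; \contFT{\bumpfxn}(-\latticepoint) \cdot \arithmeticFT{g}(-\latticepoint).
\]
Since $\|T\|_{\ell^1 \to \ell^1} = \|K\|_{\ell^1(\Z^\dimension)}$, it follows that
\[
\|T\|_{\ell^1 \to \ell^1} \;\leq\; \|\contFT{\bumpfxn}\|_{\ell^1(\Z^\dimension)} \cdot \sup_{\latticepoint \in (\Zmod{\modulus})^\dimension} |\arithmeticFT{g}(\latticepoint)|,
\]
where $\|\contFT{\bumpfxn}\|_{\ell^1(\Z^\dimension)} < \infty$ because $\bumpfxn$ is smooth and compactly supported, so $\contFT{\bumpfxn}$ is Schwartz.

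Finally, Riesz--Thorin interpolation between the $p=1$ and $p=2$ bounds yields exactly the exponents $2-2/p$ and $2/p-1$ appearing in \eqref{eq:MSW_periodic_inequality}. The implicit constants depend only on $\bumpfxn$ and $p$ because $\|\bumpfxn\|_\infty$ and $\|\contFT{\bumpfxn}\|_{\ell^1(\Z^\dimension)}$ are determined by the fixed bump and are independent of both $\modulus$ and $g$. There is no substantive obstacle to this plan; the only step requiring a bit of care is the kernel identity, where the compact support of $\bumpfxn$ and the $\modulus$-periodicity of $g$ are used to collapse the torus integral into a rapidly convergent Euclidean one.
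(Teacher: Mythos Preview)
The paper does not supply its own proof of this lemma; it is quoted as Proposition~2.2 of \cite{MSW_spherical} and used as a black box. Your argument is correct and is exactly the standard one: Plancherel gives the $p=2$ endpoint, the explicit kernel identity $K(n)=\contFT{\bumpfxn}(-n)\,\arithmeticFT{g}(-n)$ gives the $p=1$ endpoint, and Riesz--Thorin interpolation yields the stated exponents. One small wording issue: in your last paragraph you assert independence of the constants from $\modulus$ as well as from $g$, but since $\bumpfxn$ is supported in $[-1/2\modulus,1/2\modulus]^\dimension$ the bump already encodes $\modulus$; the lemma as stated only claims independence from $g$, which is precisely what your bounds $\|\bumpfxn\|_\infty$ and $\|\contFT{\bumpfxn}\|_{\ell^1(\Z^\dimension)}$ deliver.
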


We will apply Lemma~\ref{lemma:MSW_periodic_inequality} with the sequence $g(\latticepoint)$ taken to be the Kloosterman sums $K(\modulus, \radius^2;\latticepoint)$. 
We have the following estimates for the Kloosterman and Gauss sums. 
\begin{Gauss}[(12.5) on p. 151 of \cite{Grosswald}]
For all $\latticepoint \in \lattice$, 
\begin{equation}
\absolutevalueof{\twistedGausssum{\latticepoint}} 
\leq 2^{\dimension/2}\modulus^{-\dimension/2} 
. 
\end{equation}
\end{Gauss}
Applying the triangle inequality, we immediately obtain the {Gauss bound} for Kloosterman sums:  
\begin{equation}
\absolutevalueof{K(\modulus,\latticepoint; N)} 
\leq 2^{\dimension/2} \modulus^{1-\dimension/2} 
. 
\end{equation}
Kloosterman beat the Gauss bound for Kloosterman sums by making use of oscillation between Gauss sums in the Kloosterman sums, and consequently, he extended the Hardy--Littlewood circle method for representations of diagonal quadratic forms in 5 or more variables down to 4 variables. 
Similarly here, the Gauss bound is insufficient for our purposes and we need to make use of cancellation between the Gauss sums. The first type of bound to appear for this is due to Kloosterman in \cite{Kloosterman}. The best possible estimate of this sort is \emph{Weil's bound} which essentially obtains square-root cancellation in the average over $\unit \in \unitsmod{\modulus}$. 
% The bound for the error term relies crucially on the Weil bound for Kloosterman sums, which improves over the Gauss bound. 
\begin{Weil}[(1.13) of \cite{Magyar_discrepancy}]
For each modulus $\modulus \in \N$ write $\modulus = \modulus_{odd} \cdot \modulus_{even}$ where $\modulus_{odd}$ is odd while $\modulus_{even}$ is the precise power of 2 that divides $\modulus$. 
For all $\epsilon>0$, we have 
\begin{equation}\label{eq:Weil_bound}
\absolutevalueof{\twistedKloostermansum{\latticepoint}} 
\lesssim_{\epsilon} \modulus^{-\frac{\dimension-1}{2}+\epsilon} \gcdof{\modulus_{odd}}{\largenum}^{1/2} \modulus_{even}^{1/2} 
\end{equation}
where the implicit constants are independent of $\modulus$ and uniform in $\latticepoint \in \Z^\dimension$. 
\end{Weil}

\begin{rem}
We note that in our definition of the Kloosterman sums $K(\modulus, N, \latticepoint)$, we have the following important multiplicativity property: if $\gcdof{\modulus_1}{\modulus_2}=1$, then for any $N \in \N$ and $\latticepoint \in \Z^\dimension$, 
\begin{equation}\label{eq:Kloosterman_multiplicativity}
K(\modulus_1 \modulus_2, N; \latticepoint) 
= K(\modulus_1, N; \latticepoint) K(\modulus_2, N; \latticepoint) 
. 
\end{equation}
For a proof see Lemma 5.1 in \cite{Davenport}. 
%Alternatively, for a proof see (4.14) of \cite{Magyar_discrepancy} - this actually reduces to the standard definition of Kloosterman sums which satisfy a different multiplicativity relation which can be used to deduce ours. 
% We note that the usual definition of Kloosterman sums involves a change of variables. 
\end{rem}

% --
%\input{main_term.tex}
% --
%
% ---
\section{The main term}\label{section:main_term}
% ---
% 
% >>
Our starting point is The Approximation Formula; we have 
\[
\latticeFT{\arithmeticspheremeasure{\radius}}(\toruspoint) 
= \sum_{\modulus=1}^{\radius} \latticeFT{\HLop^{\modulus}}(\toruspoint)  + \arithmeticFT{\error}(\toruspoint)
\]
where $\latticeFT{\HLop^{\modulus}}(\toruspoint)$ is the multiplier
\[
% \latticeFT{\HLop^{\modulus}}(\toruspoint) 
% := 
\sum_{\latticepoint \in \lattice} K(\modulus,\radius^2;\latticepoint) \Psi(\modulus \toruspoint - \latticepoint) \contFT{d\spheremeasure_\radius}(\toruspoint - \latticepoint/\modulus) 
. 
\]
Let $\HLop^{\modulus}$ be the convolution operator with multiplier $\latticeFT{\HLop^{\modulus}}$. Then letting $\HLop := \sum_{1 \leq \modulus \leq \radius} \HLop^{\modulus}$, we have $\arithmeticsphericalavgop{\radius} = \HLop + \error$ for each $\radius \in \acceptableradii$. 
The main goal of this section is to prove the following lemma regarding the main terms $\HLop$. 
We will discuss the error terms $\error$ in Section~\ref{section:error_term}. 
\begin{lemma}\label{lemma:main_term}
If \(\radii \subset \acceptableradii\) is a lacunary subsequence of radii satisfying \eqref{good_primes_are_units}, \eqref{good_primes_estimate} of {\bf G} and \eqref{bad_primes_estimate} of {\bf B} for some \(s \in [0,1]\), 
then for $\dimension \geq 5$, 
\begin{equation}\label{eq:main_term_bound}
\lpnorm{p}{\sup_{\radius \in \radii} \absolutevalueof{\HLop \fxn}} 
\lesssim 
\lpnorm{p}{\fxn} 
\end{equation}
if 
\(
\frac{\dimension}{\dimension-(1+s)} 
\leq  
p 
\leq 
2 
\)
and simultaneously 
\(
\frac{\dimension-1}{\dimension-2} 
< 
p 
\leq 
2 . 
\)
Furthermore, if $\dimension = 4$ and 2 is a good prime (\(2 \in \goodprimes\)), then \eqref{eq:main_term_bound} is true for the same range of \(p\). 
%Namely the range given by \( \max \left\{ \frac{\dimension-1}{\dimension-2}, \frac{\dimension}{\dimension-(1+s)} \right\} < p \leq 2 . \) 
\end{lemma}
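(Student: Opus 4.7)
The plan is to adapt the Magyar--Stein--Wainger approach from \cite{MSW_spherical} to the lacunary setting, combining the transference principle with Weil-type Kloosterman estimates refined using the good/bad prime dichotomy. We view the maximal operator $\fxn \mapsto \sup_{\radius \in \radii} |\HLop \fxn|$ as an $\ell^\infty(\radii)$-valued operator and estimate the contribution of each Farey level $\HLop^{\modulus}$ separately before summing over $\modulus$.

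First I would apply the MSW periodic inequality (Lemma~\ref{lemma:MSW_periodic_inequality}) with the Kloosterman kernel $K(\modulus, \radius^2; \cdot)$ as the $\modulus$-periodic sequence and $\Psi(\modulus \cdot) \contFT{d\spheremeasure_\radius}$ as the smooth bump. This yields a factor
\[
\bigl(\sup_b |K(\modulus, \radius^2; b)|\bigr)^{2-2/p} \bigl(\sup_n |\arithmeticFT{K}(\modulus, \radius^2; n)|\bigr)^{2/p-1}
\]
together with an $L^p(\R^{\dimension})$ operator norm of the corresponding continuous maximal operator, which I would bound, via MSW transference, by the continuous lacunary spherical maximal function (bounded on $L^p$ for $p>1$ by Calder\'on and Coifman--Weiss). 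Weil's bound \eqref{eq:Weil_bound} supplies $\sup_b |K(\modulus, \radius^2; b)| \lesssim \modulus^{-(\dimension-1)/2+\epsilon}$ up to a gcd/2-adic factor, while a direct computation (swapping sums) gives $\arithmeticFT{K}(\modulus, \radius^2; n) = c_\modulus(n^2 - \radius^2)$, a Ramanujan sum whose sup over $n$ is governed by the small set of residues $n$ with $n^2 \equiv \radius^2 \pmod{\modulus}$, bounded using hypothesis \eqref{good_primes_estimate}.

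To sum over $\modulus$, I would exploit multiplicativity \eqref{eq:Kloosterman_multiplicativity} to factor each $\modulus$ uniquely as $\modulus = \goodmodulus \cdot \badmodulus$ with $\goodmodulus$ supported on good primes and $\badmodulus$ on bad primes, decoupling the sum into an Euler-product/Dirichlet convolution structure. The sum over good moduli $\goodmodulus$ converges using Weil's bound combined with \eqref{good_primes_estimate}, producing the constraint $p > \frac{\dimension-1}{\dimension-2}$. The sum over bad moduli $\badmodulus$ converges through \eqref{bad_primes_estimate}, yielding the constraint $p \geq \frac{\dimension}{\dimension-(1+s)}$. In dimension four, Weil's bound carries an additional $\modulus_{even}^{1/2}$ factor; assuming $2 \in \goodprimes$ forces $\radius^2$ to be odd (via \eqref{good_primes_are_units}), so the 2-part of $\modulus$ behaves like the good part and the 2-adic loss vanishes.

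The main obstacle is extracting summable decay in $\modulus$ from the combination of the Weil and Ramanujan estimates. Since $\sum_{\oddprime \in \badprimes} \oddprime^{-s}$ only barely converges, the bad-prime contribution must be treated by an Euler-product expansion along each bad prime, using multiplicativity to convert the sum of operator norms into a product that factorizes across coprime components. Precisely balancing the arithmetic decay, the Ramanujan sum bound, and the uniform continuous $L^p$ norm forces the two endpoint constraints on $p$; showing that both these conditions are simultaneously achievable on the stated range is where the proof is most delicate.
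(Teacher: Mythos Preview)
Your overall architecture---factor $\modulus=\goodmodulus\badmodulus$, treat the bad part with the crude Gauss bound and the good part with Weil, then sum via an Euler product---matches the paper and is the right skeleton. The gap is in how you propose to exploit hypothesis~\eqref{good_primes_estimate}. You claim that the $\ell^1$ side of Lemma~\ref{lemma:MSW_periodic_inequality}, namely $\sup_n |\arithmeticFT{K}(\modulus,\radius^2;n)| = \sup_n |c_\modulus(n^2-\radius^2)|$, is ``governed by the small set of residues $n$ with $n^2\equiv\radius^2$'' and can therefore be improved using \eqref{good_primes_estimate}. This is not right: for any fixed $\radius$ that supremum is exactly $\varphi(\modulus)$, attained at any single $n$ with $n^2\equiv\radius^2\pmod{\modulus}$ (and such $n$ always exist). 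The number of such $n$ is irrelevant to the sup, and more importantly \eqref{good_primes_estimate} says nothing about a single radius---it bounds how many residue classes the \emph{collection} $\{\radius_j^2\}$ occupies mod $\oddprime$.

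The paper uses \eqref{good_primes_estimate} in a different place. It does not try to improve the per-radius $\ell^1$ bound at all. Instead, using multiplicativity \eqref{eq:Kloosterman_multiplicativity} it factors $\HLop^{\modulus}=T^\modulus_\radius\circ U^{2,\modulus}_\radius\circ U^{1,\modulus}_\radius$ where $U^{1,\modulus}_\radius$ carries the Kloosterman sum $K(\goodmodulus,\radius^2;\cdot)$ and hence depends on $\radius$ only through $\radius^2\bmod\goodmodulus$. The radii are then partitioned into the residue classes $\radii_{i(\goodmodulus)}$; on each class $U^{1,\modulus}_\radius$ is \emph{constant} in $\radius$ and can be pulled outside the sup, after which transference and the lacunary Calder\'on/Coifman--Weiss bound handle $\sup_\radius|T^\modulus_\radius U^{2,\modulus}_\radius|$ at cost $\badmodulus^{1-\dimension/p'}$. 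Hypothesis~\eqref{good_primes_estimate} enters only to bound the number of nonempty classes (this is Proposition~\ref{prop:counting_bound_for_maximal_multipliers} together with~\eqref{eq:Euclidean_radii_good_prime_set_size}), and the Weil bound is applied once per class via Proposition~\ref{prop:Weil_bound_for_a_multiplier}. Without this grouping step you cannot separate the $r$-dependent arithmetic factor from the maximal operation, and the argument as you wrote it does not close.
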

% 
% <<
% \noindent 
Before proving Lemma~\ref{lemma:main_term} we orient ourselves with a few propositions. 
All implicit constants are allowed to depend on the dimension $\dimension$ and $p$. 
To start, we have the triangle inequality for any subsequence $\radii \subseteq \acceptableradii$, 
\begin{equation}\label{eq:singular_series_bound}
\ellpoperatornorm{\sup_{\radius \in \radii} \absolutevalueof{\HLop}} 
\leq \sum_{\modulus=1}^\infty \ellpoperatornorm{\sup_{\radius \in \radii}  \absolutevalueof{\HLop^{\modulus}}} 
. 
\end{equation}
We restrict our attention to an individual summand for the time being. 
We have the following bound from \cite{MSW_spherical}.
\begin{prop}[Proposition~3.1 (a) in \cite{MSW_spherical}]\label{prop:Stein+Gauss_bound_for_multipliers}
If $\frac{\dimension}{\dimension-1} < p \leq 2$, then 
\begin{equation*}
\ellpoperatornorm{\sup_{\radius \in \acceptableradii} \absolutevalueof{\HLop^{\modulus}}} 
\lesssim 
\modulus^{1-\frac{\dimension}{p'}} 
. 
\end{equation*}
\end{prop}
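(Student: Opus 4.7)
The plan is to factor the multiplier $\widehat{C^{q}_{r}}(\xi) = \sum_{m \in \Z^d} K(q,r^2;m)\,\Psi(q\xi - m)\,\widetilde{d\sigma_{r}}(\xi - m/q)$ into an \emph{arithmetic} $q$-periodic piece (carrying the Kloosterman sums) times a \emph{continuous} piece (carrying the spherical Fourier transform), and to estimate each factor by a different tool: the arithmetic factor by the Magyar--Stein--Wainger periodic multiplier inequality (Lemma~\ref{lemma:MSW_periodic_inequality}) and the continuous factor by Stein's spherical maximal theorem transferred to $\Z^d$ via the Magyar--Stein--Wainger transference lemma. Concretely, since the bumps $\Psi(q\xi - m)$ have pairwise disjoint supports and $K(q,r^{2};m)$ depends only on $m \bmod q$, I can pick an auxiliary smooth bump $\Phi$ with $\Phi \equiv 1$ on $\operatorname{supp}(\Psi)$ and slightly larger support, and write
\[
\widehat{C^{q}_{r}}(\xi) \;=\; \mu_{r,q}(\xi)\cdot \nu_{r,q}(\xi),
\qquad
\mu_{r,q}(\xi):=\sum_{m}K(q,r^{2};m)\Phi(q\xi-m),
\qquad
\nu_{r,q}(\xi):=\sum_{m}\Psi(q\xi-m)\widetilde{d\sigma_{r}}(\xi-m/q),
\]
so that $C^{q}_{r} = M_{r,q}\circ N_{r,q}$ at the operator level.

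For the continuous factor $N_{r,q}$, the symbol $\nu_{r,q}$ is the $q^{-1}\Z^d$-periodization of the $\R^d$-multiplier $\Psi(q\xi)\widetilde{d\sigma_{r}}(\xi)$, which on $\R^d$ represents the Stein spherical average precomposed with a bounded bump Fourier multiplier. By Stein's continuous spherical maximal theorem, $\sup_{r}|F\ast d\sigma_{r}|$ is bounded on $L^{p}(\R^{d})$ for $p>\frac{d}{d-1}$; applying the Magyar--Stein--Wainger transference lemma with $B_{1}=\C$ and $B_{2}=\ell^{\infty}(\acceptableradii)$ yields $\|\sup_{r}|N_{r,q}f|\|_{\ell^{p}(\Z^{d})}\lesssim \|f\|_{\ell^{p}(\Z^{d})}$ with a constant independent of $q$. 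For the arithmetic factor $M_{r,q}$, I apply the vector-valued form of Lemma~\ref{lemma:MSW_periodic_inequality} to the $q$-periodic $\ell^{\infty}(\acceptableradii)$-valued sequence $g(m):=\bigl(K(q,r^{2};m)\bigr)_{r}$. The Gauss bound gives $\sup_{m}\|g(m)\|_{\ell^{\infty}}\lesssim q^{1-d/2}$, while a direct orthogonality computation inside the Gauss-sum definition of $K$ identifies
\[
\widehat{g}(n)_{r} \;=\; \sum_{a\in\unitsmod{q}} \eof{\tfrac{a(n^{2}-r^{2})}{q}} \;=\; c_{q}(n^{2}-r^{2}),
\]
a Ramanujan sum bounded by $q$ uniformly in $r,n$. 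Lemma~\ref{lemma:MSW_periodic_inequality} then gives
\[
\Bigl\|\sup_{r}|M_{r,q}h|\Bigr\|_{\ell^{p}} \;\lesssim\; (q^{1-d/2})^{2-2/p}\cdot q^{\,2/p-1}\,\|h\|_{\ell^{p}} \;=\; q^{\,1-d/p'}\,\|h\|_{\ell^{p}},
\]
and composing with the bound on $N_{r,q}$ through $C^{q}_{r}=M_{r,q}\circ N_{r,q}$ completes the estimate.

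The main obstacle is that $K(q,r^{2};m)$ carries non-trivial $r$-dependence, so the arithmetic and analytic factors do not fully decouple and the outer $\sup_{r}$ cannot be moved casually. The remedy, which drives the choice of framework above, is to handle everything in a vector-valued setting: Lemma~\ref{lemma:MSW_periodic_inequality} is applied with an $\ell^{\infty}(\acceptableradii)$-valued periodic sequence so that the Gauss and Ramanujan bounds survive uniformly in $r$, and the transference lemma is applied with $\ell^{\infty}(\acceptableradii)$ target so that Stein's continuous maximal theorem can control the continuous side. A minor bookkeeping point is choosing the fattened bump $\Phi$ so that $\mu_{r,q}\cdot\nu_{r,q}$ exactly reproduces $\widehat{C^{q}_{r}}$ without creating spurious mass outside $\operatorname{supp}(\Psi(q\cdot - m))$; this is immediate from the disjoint-support observation.
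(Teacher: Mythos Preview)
Your factorization $C^q_r = M_{r,q}\circ N_{r,q}$ and the separate estimates you cite are individually sensible, but the way you combine them does not close. The bound you obtain for the arithmetic factor is
\[
\Bigl\|\sup_{r}|M_{r,q}h|\Bigr\|_{\ell^p}\lesssim q^{1-d/p'}\|h\|_{\ell^p}
\]
for a \emph{fixed} scalar function $h$, i.e.\ a bound for $M_{r,q}$ as a map $\ell^p\to\ell^p(\ell^\infty)$. In the composition $C^q_r f=M_{r,q}(N_{r,q}f)$, however, the input to $M_{r,q}$ is $N_{r,q}f$, which itself varies with $r$. To compose you would need the \emph{diagonal} bound $\|\sup_r|M_{r,q}g_r|\|_{\ell^p}\lesssim q^{1-d/p'}\|\sup_r|g_r|\|_{\ell^p}$, and your ``vector-valued Lemma~\ref{lemma:MSW_periodic_inequality}'' does not deliver this: at the $\ell^2$ endpoint the inequality $\|\sup_r|T_r f|\|_2\le\sup_{r,m}|g_r(m)|\,\|f\|_2$ is false in general for families of multipliers with uniformly bounded symbols, so the interpolation you sketch has no valid $p=2$ anchor. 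Reversing the order of the two factors runs into the symmetric problem for $N_{r,q}$.

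The argument in \cite{MSW_spherical} circumvents this by \emph{removing} the $r$-dependence from the arithmetic factor before taking the supremum. One expands $K(q,r^2;m)=\sum_{a\in(\Z/q\Z)^\times}e(-ar^2/q)\,G(a,q;m)$ and applies the triangle inequality in $a$: for each fixed $a$ the Gauss-sum multiplier $S^{q,a}$ (built from $G(a,q;m)$) is independent of $r$, so $\sup_r|e(-ar^2/q)T^q_r(S^{q,a}f)|=\sup_r|T^q_r(S^{q,a}f)|$ is a genuine continuous spherical maximal function applied to the fixed function $S^{q,a}f$. Stein's theorem plus transference gives $\|\sup_r|T^q_r(S^{q,a}f)|\|_p\lesssim\|S^{q,a}f\|_p$ for $p>\frac{d}{d-1}$, and the scalar Lemma~\ref{lemma:MSW_periodic_inequality} applied to $g(m)=G(a,q;m)$ (with $\sup_m|G(a,q;m)|\lesssim q^{-d/2}$ and $|\widehat g(n)|=|e(an^2/q)|=1$) yields $\|S^{q,a}\|_{p\to p}\lesssim q^{-d/p'}$. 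Summing over the $\le q$ values of $a$ recovers $q^{1-d/p'}$. The point is that the $r$-dependence of the Kloosterman sum is entirely in the harmless phase $e(-ar^2/q)$, which is what lets the two factors decouple; working with the full Kloosterman sum as an $\ell^\infty$-valued coefficient, as you do, obscures precisely this.
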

\noindent
This bound applies to the full sequence of radii and hence any subsequence, which we will choose to be $\radii$ in a moment. We briefly record that the range of $\ell^p(\Z^\dimension)$-spaces improves if one replaces Stein's theorem (for the spherical maximal function) with the Calder\'on, Coifman--Weiss theorem for any lacunary subsequence of $\acceptableradii$ in the proof of Proposition~\ref{prop:Stein+Gauss_bound_for_multipliers}. 
See Proposition~3.1 (a) in \cite{MSW_spherical} for more details. 
\begin{prop}\label{prop:Gauss_bound_for_multipliers}
If $\radii$ is a lacunary subsequence of $\acceptableradii$ and $1 < p \leq 2$, then 
\begin{equation}\label{eq:CCW+Gauss_bound_for_multipliers}
\ellpoperatornorm{\sup_{\radius \in \radii} \absolutevalueof{\HLop^{\modulus}}} 
% \leq \inparentheses{2^\dimension \modulus^{-\frac{\dimension-2}{2}}}^{\frac{2}{p'}} 
\lesssim 
\; \modulus^{1-\frac{\dimension}{p'}} 
. 
\end{equation}
\end{prop}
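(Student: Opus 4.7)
The plan is to adapt the argument for Proposition~3.1(a) in \cite{MSW_spherical}, with the single substantive modification being to invoke the Calder\'on--Coifman-Weiss theorem on the continuous lacunary spherical maximal function (bounded on $L^p(\R^\dimension)$ for all $1 < p \leq \infty$) in place of Stein's spherical maximal theorem. Since in the MSW argument the range of $p$ in the discrete setting is inherited directly from the range in the continuous setting, this replacement should automatically extend the range of boundedness from $p > \dimension/(\dimension-1)$ to $p > 1$.

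First I would factor the multiplier $\latticeFT{\HLop^{\modulus}}$ locally near each rational $\latticepoint/\modulus$: because $\Psi$ is supported in $[-1/4,1/4]^\dimension$, the supports of $\Psi(\modulus\toruspoint - \latticepoint)$ for distinct $\latticepoint \in (\Zmod{\modulus})^\dimension$ are disjoint, so on each such piece the multiplier equals the product of an arithmetic factor $K(\modulus, \radius^2; \latticepoint)\Psi(\modulus\toruspoint - \latticepoint)$ and an analytic factor $\contFT{d\spheremeasure_\radius}(\toruspoint - \latticepoint/\modulus)$ (cut off by a slight widening of $\Psi$). This factorization globalizes cleanly because the two pieces multiply to zero outside these disjoint supports.

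Next, I would apply the Magyar--Stein--Wainger transference principle to the analytic part with $B_1 = \C$ and $B_2 = \ell^\infty(\radii)$, reducing its maximal function bound to the $L^p(\R^\dimension)$-boundedness of (a low-frequency truncation of) the continuous lacunary spherical maximal function, which is Calder\'on--Coifman-Weiss. This piece contributes a bound independent of $\modulus$. The arithmetic part I would handle with Lemma~\ref{lemma:MSW_periodic_inequality} applied to the $\modulus$-periodic sequence $g(\latticepoint) := K(\modulus, \radius^2; \latticepoint)$: the Gauss bound gives $\sup_\latticepoint |g(\latticepoint)| \lesssim \modulus^{1-\dimension/2}$, while a short computation identifies the discrete Fourier transform as the Ramanujan sum $\arithmeticFT{g}(n) = \sum_{\unit \in \unitsmod{\modulus}} \eof{\unit(n^2 - \radius^2)/\modulus}$, whence $\sup_n |\arithmeticFT{g}(n)| \leq \totientfunction{\modulus} \leq \modulus$. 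Substituting into \eqref{eq:MSW_periodic_inequality} and simplifying exponents yields a bound of $\modulus^{1-\dimension/p'}$, uniform in $\radius \in \radii$; multiplying by the analytic bound completes the proof.

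The principal subtlety, already addressed in \cite{MSW_spherical}, is that both factors of the multiplier depend on $\radius$, so the factorization must interact correctly with the supremum over $\radius \in \radii$. The vector-valued transference principle with $B_2 = \ell^\infty(\radii)$ is precisely designed to handle this, and no additional work should be needed to make it compatible with our lacunary subsequence; the main obstacle is really just verifying that the MSW machinery passes through with $\radii$ in place of $\acceptableradii$, which it does because Calder\'on--Coifman-Weiss is agnostic to the particular lacunary sequence.
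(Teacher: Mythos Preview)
Your proposal is correct and matches the paper's approach exactly: the paper records Proposition~\ref{prop:Gauss_bound_for_multipliers} without a separate proof, noting only that one replaces Stein's spherical maximal theorem by the Calder\'on--Coifman--Weiss lacunary theorem in the proof of Proposition~3.1(a) of \cite{MSW_spherical}, which is precisely your plan. Your additional sketch of the MSW machinery (factorization into arithmetic and analytic parts, transference with $B_2=\ell^\infty(\radii)$, and Lemma~\ref{lemma:MSW_periodic_inequality} with the Gauss bound) is accurate, and you correctly flag and defer to \cite{MSW_spherical} for the one subtle point---that both factors depend on $\radius$---which is indeed handled there.
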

% 
% \begin{proof}
% % This is immediate from the Gauss bound and Lemma~\ref{lemma:MSW_periodic_inequality} since the Gauss sums do not depend on the radii. 
% The proof is the same as the above but replaces Stein's theorem with Calder\'on, Coifman--Weiss's theorem since $\radii$ grows lacunarily. 
% \end{proof}
% 

In \cite{MSW_spherical} we learned that we can factor $\HLop^{\modulus} = S^{\modulus}_\radius \circ T^{\modulus}_\radius = T^{\modulus}_\radius \circ S^{\modulus}_\radius$ into two commuting multipliers $S^{\modulus}_\radius$ and $T^{\modulus}_\radius$, effectively separating the arithmetic and analytic aspects of $\HLop^{\modulus}$, by using a smooth function $\Psi'$ such that  $\charfxn{\setof{[-1/4,1/4]^\dimension}} \leq \Psi' \leq \charfxn{\setof{[-1/2,1/2]^\dimension}}$ on $\T^\dimension$ so that $\Psi \cdot \Psi' = \Psi$. For $\radius \in \acceptableradii$ and $\modulus \in \N$, we have the \emph{Kloosterman multipliers} 
\begin{equation}
\latticeFT{S^{\modulus}_\radius}(\xi) 
:= \sum_{\latticepoint \in \lattice} K(\modulus,\radius^2;\latticepoint) \Psi(\modulus \toruspoint - \latticepoint) 
\end{equation}
and the localized spherical averaging multipliers 
\begin{equation}
\latticeFT{T^{\modulus}_\radius}(\xi) 
:= \sum_{\latticepoint \in \lattice} \Psi'(\modulus \toruspoint - \latticepoint) \contFT{d\spheremeasure_{\radius}}(\toruspoint - \latticepoint/\modulus) 
. 
\end{equation}
% 
% >>>
In order to improve on the Magyar--Stein--Wainger range of $\ell^p(\Z^\dimension)$-spaces for $\sup_{\radius \in \radii} \absolutevalueof{\HLop}$, 
\footnote{The sharp range of $\ell^p(\Z^\dimension)$-spaces is $p > \frac{\dimension}{\dimension-2}$ when $\radii$ is $\acceptableradii$, which results from summing \eqref{eq:CCW+Gauss_bound_for_multipliers} over $\modulus \in \N$ in \eqref{eq:singular_series_bound}} 
we need to beat the exponent ${1-\frac{\dimension}{p'}}$ of the modulus $\modulus$ in \eqref{eq:CCW+Gauss_bound_for_multipliers}. 
Using the Weil bound, we do so for an individual convolution operator $\HLop^\modulus$. 
\begin{prop}[Weil bound for Kloosterman multipliers]\label{prop:Weil_bound_for_a_multiplier}
If $1 \leq p \leq 2$ and $\modulus$ is an odd number, then for each $\radius \in \acceptableradii$ and for all $\epsilon>0$, 
\begin{equation}\label{eq:Weil_bound_for_a_multiplier}
\ellpoperatornorm{\HLop^{\modulus}} 
% \lesssim_\epsilon \inparentheses{\modulus^{-\frac{\dimension-1}{2}+\epsilon} \gcdof{\modulus_{odd}}{\radius^\degreetwo}^{1/2} \modulus_{even}^{1/2}}^{\frac{2}{p'}} \lpnorm{p}{\fxn} 
% \lesssim_\epsilon \modulus^{-\frac{\dimension-1}{p'} + \epsilon} \gcdof{\modulus_{odd}}{\radius^\degreetwo}^{\frac{1}{p'}} \modulus_{even}^{\frac{1}{p'}} 
\lesssim_\epsilon \totientfunction{\modulus}^{\frac{2}{p}-1} \cdot \modulus^{-\frac{\dimension-1}{p'} + \epsilon} \gcdof{\modulus}{\radius^\degreetwo}^{\frac{1}{p'}} 
. 
\end{equation}
\end{prop}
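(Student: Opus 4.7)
The plan is to exploit the factorization $\HLop^{\modulus} = S^{\modulus}_\radius \circ T^{\modulus}_\radius$ recalled above and bound the two factors separately on $\ell^p(\Z^\dimension)$; the Weil bound will enter only through the arithmetic factor $S^{\modulus}_\radius$.

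For the analytic factor $T^{\modulus}_\radius$, the multiplier is the $(1/\modulus)$-periodization of $\Psi'(\modulus \toruspoint) \contFT{d\spheremeasure_\radius}(\toruspoint)$, so the Magyar--Stein--Wainger transference lemma reduces its $\ell^p(\lattice) \to \ell^p(\lattice)$ norm to the $L^p(\euclideanspace) \to L^p(\euclideanspace)$ norm of the corresponding continuous convolution operator. That operator's kernel is the convolution of the inverse Fourier transform of $\Psi'(\modulus \, \cdot)$ (a Schwartz function whose $L^1$-norm is independent of $\modulus$) with the spherical measure $d\spheremeasure_\radius$ (whose total mass is independent of $\radius$), so Young's inequality supplies the uniform bound $\ellpoperatornorm{T^{\modulus}_\radius} \lesssim 1$.

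For the arithmetic factor $S^{\modulus}_\radius$, the multiplier is precisely in the form required by Lemma~\ref{lemma:MSW_periodic_inequality} with $g(\latticepoint) := K(\modulus, \radius^2; \latticepoint)$. The Weil bound, together with the hypothesis that $\modulus$ is odd (so $\modulus_{even}=1$), gives
\[
\sup_{m} \absolutevalueof{g(m)} \lesssim_\epsilon \modulus^{-(\dimension-1)/2+\epsilon} \gcdof{\modulus}{\radius^2}^{1/2}.
\]
For the $(\Zmod{\modulus})^\dimension$-Fourier transform, unfolding the definition of $K$ and carrying out the $b$-sum first forces $c \equiv -n \pmod{\modulus}$; the Gauss-type inner sum collapses to the single term $\eof{\unit n^2/\modulus}$, producing the Ramanujan-type sum
\[
\arithmeticFT{g}(n) = \sum_{\unit \in \unitsmod{\modulus}} \eof{\unit (n^2 - \radius^2)/\modulus},
\]
which is trivially bounded in modulus by $\totientfunction{\modulus}$. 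Substituting these two estimates into \eqref{eq:MSW_periodic_inequality} and using $2-2/p=2/p'$, the exponents assemble into
\[
\ellpoperatornorm{S^{\modulus}_\radius} \lesssim_\epsilon \totientfunction{\modulus}^{2/p-1} \cdot \modulus^{-(\dimension-1)/p'+\epsilon'} \gcdof{\modulus}{\radius^2}^{1/p'},
\]
and composing with the uniform bound on $T^{\modulus}_\radius$ yields \eqref{eq:Weil_bound_for_a_multiplier}.

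The main conceptual point is the identification of $\arithmeticFT{g}$ as a Ramanujan sum in $n^2 - \radius^2$: the resulting $\totientfunction{\modulus}$-bound is dramatically better than the trivial Parseval estimate $\sup_n \absolutevalueof{\arithmeticFT{g}(n)} \leq \modulus^\dimension \sup_m \absolutevalueof{g(m)}$, and it is precisely this savings that generates the small $\totientfunction{\modulus}^{2/p-1}$ prefactor in \eqref{eq:Weil_bound_for_a_multiplier}, which is in turn what allows the $\modulus$-summation \eqref{eq:singular_series_bound} to converge under hypothesis \eqref{good_primes_estimate} of {\bf G}.
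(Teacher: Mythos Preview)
Your proof is correct and follows the same approach as the paper's: apply Lemma~\ref{lemma:MSW_periodic_inequality} to the Kloosterman multiplier $S^{\modulus}_\radius$ with the Weil bound supplying $\sup_m|g(m)|$ and the trivial $\totientfunction{\modulus}$ bound for the Ramanujan-type sum supplying $\sup_n|\arithmeticFT{g}(n)|$, then compose with the uniformly bounded $T^{\modulus}_\radius$. The paper's proof is terse and leaves the $T^{\modulus}_\radius$ bound and the computation of $\arithmeticFT{g}$ implicit; your write-up fills these in, but there is no difference in strategy. (One small slip: in your description of the $\arithmeticFT{g}$ computation, the undefined variable ``$c$'' should be the Gauss-sum variable $b$; the $m$-sum is what forces $b\equiv -n$.)
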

\begin{proof}
On $\ell^2(\Z^\dimension)$, we apply the Weil bound \eqref{eq:Weil_bound} to the Kloosterman sums in $\latticeFT{S^\modulus_\radius}$. Meanwhile on $\ell^1(\Z^\dimension)$, if $K^{\modulus}_\radius$ denotes the kernel of the multiplier $\latticeFT{S^\modulus_\radius}$, then $K^{\modulus}_\radius(b) = \sum_{\unit \in \unitsmod{\modulus}} \eof{\frac{\unit [\radius^2-b^2]}{\modulus}}$ for $b \in \inparentheses{\Zmod{\modulus}}^\dimension$ where we have the trivial bound of $\totientfunction{\modulus}$. 
\eqref{eq:MSW_periodic_inequality} of Lemma~\ref{lemma:MSW_periodic_inequality} yields the bound. 
% \begin{align*} 
% \lpnorm{p}{S^{\modulus}_\radius \fxn} 
% & \lesssim_\epsilon \inparentheses{\modulus^{-\frac{\dimension-1}{2}+\epsilon} \gcdof{\modulus_{odd}}{\radius}^{1/2} \modulus_{even}^{1/2}}^{\frac{2}{p'}} \lpnorm{p}{\fxn} 
% \\ & = \modulus^{-\frac{\dimension-1}{p'} + \epsilon} \gcdof{\modulus_{odd}}{\radius}^{\frac{1}{p'}} \modulus_{even}^{\frac{1}{p'}} \lpnorm{p}{\fxn} 
% . 
% \end{align*}
\end{proof}
% <<<

However, for a fixed modulus $\modulus$ in $\N$ with $\modulus = \modulus_1 \modulus_2$ such that $\modulus_1$ and $\modulus_2$ coprime, we can factor $S_\radius^\modulus$ into two pieces. 
If $\gcdof{\modulus_1}{\modulus_2}=1$, then by the Chinese Remainder Theorem and multiplicativity of Kloosterman sums \eqref{eq:Kloosterman_multiplicativity} we have 
\begin{equation}\label{eq:decomposition_of_HLop}
\HLop^{\modulus} 
= T^{\modulus}_\radius 
  \circ U^{1, \modulus}_\radius 
  \circ U^{2, \modulus}_\radius 
\end{equation}
where the operators $U^{1, \modulus}_\radius$ and $U^{2, \modulus}_\radius$ are defined by the multipliers 
\begin{align*}
& \latticeFT{U^{1, \modulus}_\radius}(\toruspoint) 
:= \sum_{\latticepoint \in \lattice} K(\modulus_1,\radius^2;\latticepoint) \Psi_{\modulus_1 \modulus_2}'~\inparentheses{\toruspoint - \frac{\latticepoint}{\modulus_1 \modulus_2}} 
\\ 
& \latticeFT{U^{2, \modulus}_\radius}(\toruspoint) 
:= \sum_{\latticepoint \in \lattice} K(\modulus_2,\radius^2;\latticepoint) \Psi_{\modulus_1 \modulus_2}~\inparentheses{\toruspoint - \frac{\latticepoint}{\modulus_1 \modulus_2}} 
\end{align*}
since 
\begin{align*}
\latticeFT{S^{\modulus}_\radius}(\xi) 
& = \sum_{\latticepoint \in \lattice} K(\modulus_1 \modulus_2,\radius^2;\latticepoint) \Psi_{\modulus_1 \modulus_2}(\toruspoint - \frac{\latticepoint}{\modulus_1 \modulus_2}) 
\\ & = \sum_{\latticepoint \in \lattice} K(\modulus_1,\radius^2;\latticepoint) K(\modulus_2,\radius^2;\latticepoint) \Psi_{\modulus_1 \modulus_2}~\inparentheses{\toruspoint - \frac{\latticepoint}{\modulus_1 \modulus_2}} \Psi_{\modulus_1 \modulus_2}'~\inparentheses{\toruspoint - \frac{\latticepoint}{\modulus_1 \modulus_2}} 
\\ & = \inparentheses{\sum_{\latticepoint \in \lattice} K(\modulus_1,\radius^2;\latticepoint) \Psi_{\modulus_1 \modulus_2}'~\inparentheses{\toruspoint - \frac{\latticepoint}{\modulus_1 \modulus_2}} } 
   \inparentheses{\sum_{\latticepoint \in \lattice} K(\modulus_2,\radius^2;\latticepoint) \Psi_{\modulus_1 \modulus_2}~\inparentheses{\toruspoint - \frac{\latticepoint}{\modulus_1 \modulus_2}} } 
. 
\end{align*}
Note that $U^{1, \modulus}_{\radius}$ is $\modulus_1$-periodic in $\radius^\degreetwo$ and $U^{2, \modulus}_{\radius}$ is $\modulus_2$-periodic in $\radius^\degreetwo$ while both of $K(\modulus_1,\radius^2;\latticepoint)$ and $K(\modulus_2,\radius^2;\latticepoint)$ are $\modulus_1 \modulus_2$-periodic in $\latticepoint \in \lattice$. 

% >>>
Using our refined decomposition \eqref{eq:decomposition_of_HLop}, we now come to the main proposition that enables us to prove Lemma~\ref{lemma:main_term}. 
\begin{prop}\label{prop:counting_bound_for_maximal_multipliers}
Fix $\modulus \in \N$ such that $\modulus = \modulus_1 \modulus_2$ with $\gcdof{\modulus_1}{\modulus_2}=1$ and $\radii$ a lacunary subsequence of $\acceptableradii$. 
Let $\radii_{i (\modulus_1)}$ denote the set of radii $\setof{\radius \in \radii : \radius^\degreetwo \equiv i \mod \modulus_1}$. 
If $1 < p \leq 2$, then 
\begin{equation}\label{eq:counting_bound_for_maximal_multipliers}
\ellpoperatornorm{\sup_{\radius \in \radii} |\HLop^{\modulus}|} 
\lesssim \modulus_2^{1-\dimension/p'} 
  \cdot \# \setof{i \in \Zmod{\modulus_1} : \radii_{i (\modulus_1)} \not= \emptyset} 
  \cdot \sup_{i \in \Zmod{\modulus_1}} \inbraces{\ellpoperatornorm{U^{1, \modulus}_{\radius_i}}} 
\end{equation} 
where $\radius_i$ is a chosen representative of $\radii_{i(\modulus_1)}$ for each $i \in \Zmod{\modulus_1}$. 
\end{prop}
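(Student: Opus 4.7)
The plan is to partition the radii $\radii$ by the residue of $\radius^2$ modulo $\modulus_1$, exploit that $U^{1,\modulus}_\radius$ is frozen on each class, and reduce the problem to a maximal $\ell^p$-bound for the hybrid operator $T^\modulus_\radius \circ U^{2,\modulus}_\radius$, which carries only the modulus-$\modulus_2$ Kloosterman arithmetic.

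First, since $\latticeFT{U^{1,\modulus}_\radius}$ depends on $\radius$ only through $\radius^2 \bmod \modulus_1$ (as $K(\modulus_1, \radius^2; \latticepoint)$ is $\modulus_1$-periodic in $\radius^2$), one has $U^{1,\modulus}_\radius = U^{1,\modulus}_{\radius_i}$ for every $\radius \in \radii_{i(\modulus_1)}$. Using the commuting factorization $\HLop^\modulus = T^\modulus_\radius \circ U^{1,\modulus}_\radius \circ U^{2,\modulus}_\radius$ and the partition $\radii = \bigsqcup_i \radii_{i(\modulus_1)}$, the subadditivity of the supremum yields
\[
\sup_{\radius \in \radii} |\HLop^\modulus f|
\leq
\sum_{i : \radii_{i(\modulus_1)} \neq \emptyset}
\sup_{\radius \in \radii_{i(\modulus_1)}} \bigl|T^\modulus_\radius U^{2,\modulus}_\radius (U^{1,\modulus}_{\radius_i} f)\bigr|.
\]
Taking $\ell^p$ norms, using sub-multiplicativity of operator norms, replacing $\|U^{1,\modulus}_{\radius_i}\|_{\ell^p \to \ell^p}$ by its supremum over $i$, and counting the nonempty residue classes reduces the claim to the uniform estimate
\[
\bigl\|\sup_{\radius \in \radii_{i(\modulus_1)}} |T^\modulus_\radius U^{2,\modulus}_\radius|\bigr\|_{\ell^p \to \ell^p}
\lesssim \modulus_2^{1-\dimension/p'}.
\]

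This key estimate is the hybrid analog of Proposition~\ref{prop:Gauss_bound_for_multipliers} with $\modulus_2$ in place of $\modulus$, and I would establish it by Riesz--Thorin interpolation mimicking the proof of that proposition. At $p=2$, Plancherel together with the Gauss bound $|K(\modulus_2, \radius^2; \latticepoint)| \lesssim \modulus_2^{1-\dimension/2}$ and the $L^2$-boundedness of the lacunary spherical maximal function on $\R^\dimension$ (via the Banach-valued Magyar--Stein--Wainger transference with $B_1 = B_2 = \ell^\infty(\radii_{i(\modulus_1)})$, exploiting the lacunarity of $\radii_{i(\modulus_1)} \subseteq \radii$) gives operator norm $\lesssim \modulus_2^{1-\dimension/2}$. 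At $p=1$, the trivial bound $|K^{\modulus_2}_\radius(b)| \leq \totientfunction{\modulus_2} \leq \modulus_2$ for the arithmetic kernel, combined with the Calder\'on, Coifman--Weiss theorem for the continuous lacunary spherical maximal function on $L^p(\R^\dimension)$, yields operator norm $\lesssim \modulus_2$. Riesz--Thorin interpolation between these endpoints delivers the desired bound.

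The main obstacle I anticipate is the scale mismatch between the bump $\Psi_\modulus$, at scale $1/(\modulus_1\modulus_2)$, and the Kloosterman weight $K(\modulus_2, \radius^2; \latticepoint)$, of period $\modulus_2$: a naive application of Lemma~\ref{lemma:MSW_periodic_inequality} at scale $\modulus$ picks up an extraneous factor of $\modulus_1^\dimension$ in $\|\arithmeticFT{g}\|_\infty$ that ruins the $p=1$ endpoint. To avoid this, for each $j \in (\Zmod{\modulus_2})^\dimension$ I would regroup the fine-scale bumps with $\latticepoint \equiv j \pmod{\modulus_2}$, whose positions $\latticepoint/\modulus$ form the sublattice $j/\modulus + (1/\modulus_1)\Z^\dimension$, so that their sum periodizes $\Psi_\modulus$ over $(1/\modulus_1)\Z^\dimension$. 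Reparametrizing the fundamental domain $[0, 1/\modulus_1)^\dimension$ as a torus recasts the multiplier in the form required by Lemma~\ref{lemma:MSW_periodic_inequality} at the genuine scale $\modulus_2$, excising the spurious $\modulus_1^\dimension$ factor and yielding the target $\modulus_2^{1-\dimension/p'}$.
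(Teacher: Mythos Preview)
Your proposal is correct and follows essentially the same route as the paper's proof: partition $\radii$ by residue classes modulo $\modulus_1$, freeze $U^{1,\modulus}_\radius$ on each class via its $\modulus_1$-periodicity in $\radius^2$, and reduce to the maximal bound $\|\sup_\radius |T^\modulus_\radius U^{2,\modulus}_\radius|\|_{p\to p}\lesssim \modulus_2^{1-\dimension/p'}$. The paper establishes this last bound more tersely, simply asserting that the proof of Proposition~\ref{prop:Gauss_bound_for_multipliers} carries over verbatim because $U^{2,\modulus}_\radius$ is $\modulus_2$-periodic in $\radius^2$ and the Kloosterman weights $K(\modulus_2,\radius^2;\latticepoint)$ are $\modulus_1\modulus_2$-periodic in $\latticepoint$; you spell out the interpolation and, more carefully than the paper, flag and resolve the scale mismatch between the bumps at scale $1/\modulus$ and the $\modulus_2$-periodic arithmetic data. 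Your regrouping observation---that $\latticeFT{U^{2,\modulus}_\radius}$ is in fact $1/\modulus_1$-periodic on $\torus$, so that one may pass to the quotient torus and apply Lemma~\ref{lemma:MSW_periodic_inequality} at the genuine modulus $\modulus_2$---is a clean way to justify what the paper leaves implicit. One small wording point: your ``$p=1$'' endpoint should really be the limit $p\to 1^+$, since the Calder\'on--Coifman--Weiss input requires $p>1$; the interpolation then runs between $p=2$ and any fixed $p_0>1$, which still yields the stated exponent for all $1<p\leq 2$.
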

It will be important in our proof of Lemma~\ref{lemma:main_term} that $\# \setof{i \in \Zmod{\modulus} : \radii_{i (\modulus)} \not= \emptyset}$ is small for \emph{most} moduli $\modulus$ and that we can apply Proposition~\ref{prop:Weil_bound_for_a_multiplier}, the Weil bound for Kloosterman multipliers to the operators $U^{1, \modulus}_{\radius_i}$.
\begin{proof}[Proof of Proposition~\ref{prop:counting_bound_for_maximal_multipliers}]
Let $\modulus = \modulus_1 \modulus_2$ and subset $\radii \subset \acceptableradii$ be a lacunary subsequence. The union bound applied to $\radii = \cup_{i \in \Zmod{\modulus_1}} \radii_{i (\modulus_1)}$ implies 
\begin{equation}\label{eq:good_prime_projected_bound}
\ellpoperatornorm{\sup_{\radius \in \radii} \absolutevalueof{\HLop^{\modulus}}} 
\leq \sum_{i=1}^{\modulus_1} \ellpoperatornorm{\sup_{\radius \in \radii_{i (\modulus_1)}} \absolutevalueof{\HLop^{\modulus}}} 
, 
\end{equation}
with the understanding that if $\radii_{i (\modulus_1)}$ is empty, then $\ellpoperatornorm{\sup_{\radius \in \radii_{i (\modulus_1)}}{\absolutevalueof{\HLop^{\modulus}}}}$ is 0. 
Therefore, \eqref{eq:counting_bound_for_maximal_multipliers} will follow from proving 
\begin{equation}
\ellpoperatornorm{\sup_{\radius \in \radii_{i (\modulus_1)}} \absolutevalueof{\HLop^{\modulus}}} 
\lesssim 
\modulus_2^{1-\dimension/p'} \ellpoperatornorm{ {U^{1, \modulus}_{\radius_i}}} 
. 
\end{equation}

Our decomposition \eqref{eq:decomposition_of_HLop} implies that 
\[
\sup_{\radius \in \radii_{i (\modulus_1)}} \absolutevalueof{\HLop^{\modulus}\fxn} 
= \sup_{\radius \in \radii_{i (\modulus_1)}} \absolutevalueof{T^\modulus_\radius S^\modulus_{\radius} \fxn} 
= \sup_{\radius \in \radii_{i (\modulus_1)}} \absolutevalueof{T^\modulus_\radius U^{2,\modulus}_{\radius} U^{1,\modulus}_{\radius} \fxn} 
. 
\]
If $\radius_1, \radius_2 \in \radii_{i (\modulus_1)}$, then $U^{1,\modulus}_{\radius_1} = U^{1,\modulus}_{\radius_2}$. 
Therefore, if $\radius_i$ is a chosen representative radius in $\radii_{i (\modulus_1)}$, then 
\[
\sup_{\radius \in \radii_{i (\modulus_1)}} \absolutevalueof{\HLop^{\modulus}\fxn} 
% = \sup_{\radius \in \radii_{i (\modulus_1)}} \absolutevalueof{S^\modulus_\radius T^\modulus_\radius \fxn} 
= \sup_{\radius \in \radii_{i (\modulus_1)}} \absolutevalueof{ {T^\modulus_\radius U^{2,\modulus}_{\radius}} \inparentheses{U^{1,\modulus}_{\radius_i} \fxn} } 
. 
\]
The operator $T^\modulus_\radius U^{2,\modulus}_{\radius}$ is very similar to $\HLop^{\modulus_2}$, and in fact \eqref{eq:CCW+Gauss_bound_for_multipliers} holds with $\HLop^{\modulus_2}$ replaced by $T^\modulus_\radius U^{2,\modulus}_{\radius}$ since $U^{2,\modulus}_{\radius}$ is $\modulus_2$-periodic in $\radius^2$ and $K(\modulus_2,\radius^2;\latticepoint)$ are $\modulus_1 \modulus_2$-periodic in $\latticepoint \in \lattice$.
(Likewise, $U^{1,\modulus}_{\radius_i}$ is very similar to $\HLop^{\modulus_1}$ and the Weil bound \eqref{eq:Weil_bound_for_a_multiplier} applies with $\HLop^{\modulus_1}$ replaced by $U^{1,\modulus}_{\radius_i}$.) 
The Magyar--Stein--Wainger transference principle combined with the Calderon, Coifman--Weiss theorem and \eqref{eq:CCW+Gauss_bound_for_multipliers} imply \eqref{eq:counting_bound_for_maximal_multipliers}
% \begin{align*} 
% \lpnorm{p}{\sup_{\radius \in \radii_{i (\modulus_1)}} \absolutevalueof{\HLop^{\modulus} \fxn}} 
% & = \lpnorm{p}{\sup_{\radius \in \radii_{i (\modulus_1)}} \absolutevalueof{S^\modulus_\radius T^\modulus_\radius \fxn}} 
% \\ & = \lpnorm{p}{\sup_{\radius \in \radii_{i (\modulus_1)}} \absolutevalueof{T^\modulus_\radius S^\modulus_{\radius_0} \fxn}} 
% \\ & \lesssim_p \lpnorm{p}{S^\modulus_{\radius_0} \fxn} 
% \end{align*}
% \[
% \lpnorm{p}{\sup_{\radius \in \radii_{i (\modulus_1)}} \absolutevalueof{\HLop^{\modulus} \fxn}} 
% % = \lpnorm{p}{\sup_{\radius \in \radii_{i (\modulus_1)}} \absolutevalueof{S^\modulus_\radius T^\modulus_\radius \fxn}} 
% % = \lpnorm{p}{\sup_{\radius \in \radii_{i (\modulus_1)}} \absolutevalueof{T^\modulus_\radius S^\modulus_{\radius_0} \fxn}} 
% \lesssim_p \lpnorm{p}{U^{1, \modulus}_{\radius_i} \fxn} 
% \]
since $\radii_{i (\modulus_1)}$ is also a (possibly finite) lacunary sequence. 
\end{proof}
% 
% <<<

% 
% --
\subsection{Proof of Lemma~\ref{lemma:main_term}} 
% --
Recall that $\theprimes$ denotes the set of primes in $\N$. 
In this section we fix our collection of radii to be a lacunary sequence \(\radii \subset \acceptableradii\) so that the set of primes \(\theprimes = \badprimes \cup \goodprimes \) is a union of the sets bad primes \(\badprimes\) and good primes \(\goodprimes\) satisfying \eqref{bad_primes_estimate} and \eqref{good_primes_are_units}, \eqref{good_primes_estimate} respectively. 
% 
% To ease notation, let $S_* := \sup_{\radius \in \radii} \absolutevalueof{S_\radius}$ and similarly for $S^\modulus_*$. 
% 
% \begin{rem}
% By multiplicativity of the Kloosterman sums \eqref{eq:Kloosterman_multiplicativity}, we would like the following to be true: 
% \begin{equation}\label{eq:singular_product_bound}
% \ellpoperatornorm{\sup_{\radius \in \radii} \absolutevalueof{S_\radius}}
% % \leq \sum_{\modulu\primepower=1}^\infty \lpnorm{p}{S^{\modulus}_*} 
% \leq \prod_{\oddprime \in \theprimes} \inparentheses{\sum_{\primepower=0}^\infty \ellpoperatornorm{\sup_{\radius \in \radii} \absolutevalueof{S^{\oddprime^\primepower}}}} 
% \end{equation} 
% for all $1 \leq p \leq \infty$. 
% This would resemble converting the singular series into the singular product in the circle method, making the analogy more precise. 
% However, for $\gcdof{\modulus_1}{\modulus_2} = 1$, the intersection of the supports the two multipliers $S^{\modulus_1}_\radius$ and $S^{\modulus_2}_\radius$ (which is the support of their product $S^{\modulus_1}_\radius S^{\modulus_2}_\radius$) is not equal to the support of the multiplier of $S^{\modulus_1 \modulus_2}_\radius$. Nonetheless, the intersection of the former is contained the latter, and in a suitable (covering) sense the latter is contained in the parent of the former. Therefore, there could be additional cancellation conspiring against us. 
% In a way, our arguments will allow us to pretend that \eqref{eq:singular_product_bound} is true. 
% \end{rem}
% 

% >>>
If $\oddprime$ is a good prime, then lifting \eqref{good_primes_estimate} to $\Zmod{\oddprime^\primepower}$ for $\primepower \in \N$ implies 
\[
\# \setof{i \in \Zmod{\oddprime^\primepower} : \radii_{i (\oddprime^\primepower)} \not= \emptyset} 
\lesssim_{\epsilon} \oddprime^{\primepower-1+\epsilon}
\]
for any $\epsilon>0$. 
Using the Chinese Remainder Theorem, we extend this to moduli $\modulus$ composed only of good primes; that is, if $\oddprime | \modulus$, then $\oddprime \in \goodprimes$. 
Let $\orderofprime{\modulus}$ denote the precise power of the prime $\oddprime$ dividing $\modulus$. 
If $\modulus$ is composed only of good primes, then 
% \begin{align*}
% \# \setof{\radius^2 \mod \modulus : \radius \in \radii} 
% & \leq \prod_{\oddprime | \modulus} \# \setof{\radius^2 \mod \oddprime^{\primepower(\modulus, \oddprime)} : \radius \in \radii} 
% \\ & \lesssim_\epsilon \prod_{\oddprime | \modulus} \oddprime^{\primepower-1+\epsilon} 
% \\ & = \modulus^{1+\epsilon} \prod_{\oddprime | \modulus} \oddprime^{-1} 
% \end{align*}
% 
\begin{equation}\label{eq:Euclidean_radii_good_prime_set_size}
% \# \setof{\radius^2 \mod \modulus : \radius \in \radii} 
\# \setof{i \in \Zmod{\modulus} : \radii_{i (\modulus)} \not= \emptyset} 
\leq \prod_{\oddprime | \modulus} \# \setof{\radius^2 \mod \oddprime^{\orderofprime{\modulus}} : \radius \in \radii} 
\lesssim_\epsilon \prod_{\oddprime | \modulus} \oddprime^{\orderofprime{\modulus}-1+\epsilon} 
% = \modulus^{1+\epsilon} \prod_{\oddprime | \modulus} \oddprime^{-1} 
\lesssim_{\epsilon} \modulus^{\epsilon} \cdot \totientfunction{\modulus}
\end{equation}
for any $\epsilon>0$. 

For a modulus $\modulus$, write $\modulus = \goodmodulus \cdot \badmodulus$ where $\goodmodulus$ is composed only of good primes while $\badmodulus$ is composed only of bad primes ($\badmodulus$ is composed of bad primes if $\oddprime | \badmodulus$ implies $\oddprime \in \badprimes$). 
In the case that a prime is both good and bad, we regard it as a bad prime in the following estimate. 
Now \eqref{eq:counting_bound_for_maximal_multipliers} of Proposition~\ref{prop:counting_bound_for_maximal_multipliers}, the Weil bound for Kloosterman multipliers \eqref{eq:Weil_bound_for_a_multiplier} and \eqref{eq:Euclidean_radii_good_prime_set_size} imply that, 
\begin{align*}
\ellpoperatornorm{\sup_{\radius \in \radii} \absolutevalueof{\HLop \fxn}} 
& 
\lesssim 
  \sum_{\modulus \in \N} \badmodulus^{1-\dimension/p'} 
  \cdot \# \setof{i \in \Zmod{\modulus} : \radii_{i (\modulus)} \not= \emptyset} 
  \cdot \sup_{i \in \Zmod{\modulus}} \inbraces{\ellpoperatornorm{U^{1, \modulus}_{\radius_i}}} 
\\ & 
\lesssim 
   \sum_{\modulus \in \N} \inparentheses{ \totientfunction{\goodmodulus}^{{2}/{p}} \cdot {\goodmodulus}^{\epsilon-\frac{\dimension-1}{p'}} \cdot \badmodulus^{1-\frac{\dimension}{p'}} } 
\\ & 
= 
   \prod_{\oddprime \in \goodprimes} \inparentheses{1 + \oddprime^{1-\frac{2}{p}+\epsilon} \sum_{\primepower \in \N} \oddprime^{\primepower (\frac{2}{p}-1-\frac{\dimension-1}{p'})}} 
      \cdot \prod_{\oddprime \in \badprimes} \inparentheses{1 + \sum_{k \in \N} \oddprime^{k(1-\frac{\dimension}{p'})}} 
\\ & 
\lesssim_p 
   \inbrackets{\prod_{\oddprime \in \goodprimes} \inparentheses{1 + \oddprime^{\epsilon-\frac{\dimension-1}{p'}}} }
   \cdot \inbrackets{\prod_{\oddprime \in \badprimes} \inparentheses{1 + \oddprime^{1-\frac{\dimension}{p'}}} } 
\\ & 
< 
   \zeta \inparentheses{\frac{\dimension-1}{p'}-\epsilon} \cdot 
  \prod_{\oddprime \in \badprimes} \inparentheses{1 + \oddprime^{1-\frac{\dimension}{p'}}} 
\end{align*}
for all sufficiently small $\epsilon>0$. 
The third inequality is true provided that $\frac{2}{p}-1-\frac{\dimension-1}{p'} < 0$ and $1-\frac{\dimension-1}{p'} < 0$; this is equivalent to $p > \frac{\dimension-1}{\dimension-2}$. 
The zeta--function converges for small enough $\epsilon>0$ if and only if $\frac{\dimension-1}{p'} > 1$. Unravelling this condition yields that we again require $p > \frac{\dimension-1}{\dimension-2}$. 
% 
% \begin{align*}
% \frac{\dimension-1}{p'}-\epsilon > 1 
% & \leftrightarrow \frac{\dimension-1}{p'} > 1+\epsilon \\
% & \leftrightarrow \frac{1}{p'} > \frac{1+\epsilon}{\dimension-1} \\
% & \leftrightarrow 1 - \frac{1}{p} > \frac{1+\epsilon}{\dimension-1} \\
% & \leftrightarrow 1 - \frac{1+\epsilon}{\dimension-1} > \frac{1}{p} \\
% & \leftrightarrow \frac{\dimension - 1 - (1+\epsilon)}{\dimension-1} > \frac{1}{p} \\
% & \leftrightarrow \frac{\dimension - 2 - \epsilon}{\dimension-1} > \frac{1}{p} \\
% & \leftrightarrow p> \frac{\dimension-1}{\dimension - 2 - \epsilon} 
% \end{align*}
%  
% Similarly, the inequality $\frac{2}{p}-1-\frac{\dimension-1}{p'} < 0$ is equivalent to $p > \frac{\dimension+1}{\dimension}$ which is better than $p > \frac{\dimension-1}{\dimension-2}$ for $\dimension > 2$. 
% 
The second factor in the final inequality is bounded precisely when 
\(
\sum_{\oddprime \in \badprimes} \oddprime^{1-\frac{\dimension}{p'}} 
< 
\infty 
. 
\)
By assumption \eqref{bad_primes_estimate}, we assume that 
\(
\sum_{\oddprime \in \badprimes} \oddprime^{-s} 
< 
\infty 
\)
for some \( s \in (0,1] \). 
Taking \( 1-\dimension/p' \leq -s \), we require 
\( 
p 
\geq 
\frac{\dimension}{\dimension-(1+s)} 
. 
\)

% \begin{rem}
% Our application of the Gauss bound is sharp for the bad primes. 
% \end{rem}

% ---
\section{The error term}\label{section:error_term}
% ---
% 
In this section we handle the error term. 
In particular we show that over an arbitrary lacunary subsequence, we can bound the error term on $\ell^p$ for $\frac{\dimension-1}{\dimension-2} < p \leq 2$. 
% Since our sequence $\radii_h$ is super-lacunary, we may apply this bound. 
Before doing so, we prove a weaker bound that does not make use of cancellation in averages of Ramanujan sums, but is simpler, and suffices for Corollary~\ref{cor:Euclid}. 
% 

% --
%\input{weak_error_term.tex}
% --
% --
\subsection{Preliminary bound for the error term}
% --
In this section, we will bound the error term using the improved bound \eqref{eq:Kloosterman_error_bound} in the Kloosterman refinement of our operators and a simple bound on $\ell^1(\Z^\dimension)$ for the operators $C^\modulus_\radius$. 
\begin{lemma}\label{lemma:weak_error_term_bound}
Let $\dimension \geq 4$. 
Suppose that $\radii$ is a lacunary sequence. 
% (such that in 4 dimensions, any $\radius \in \radii$ satisfies $\radius^2 \not\equiv 0 (\mod 4)$).  
Then for $\frac{\dimension+1}{\dimension-1} < p \leq 2$, 
\begin{equation}\label{eq:weak_error_term_bound}
\lpnorm{p}{\sup_{\radius \in \radii} \absolutevalueof{\error \fxn}} 
\lesssim_p \lpnorm{p}{\fxn} 
. 
\end{equation}
\end{lemma}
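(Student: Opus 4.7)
The overall strategy is to produce $\ell^p \to \ell^p$ bounds on each individual operator $\error$ that decay in $\radius$, and then sum these over the lacunary sequence. The $\ell^2 \to \ell^2$ ingredient is supplied directly by the Kloosterman refinement \eqref{eq:Kloosterman_error_bound}: $\lpnorm{2}{\error \fxn} \lesssim_\epsilon \radius^{2 - (\dimension+1)/2 + \epsilon} \lpnorm{2}{\fxn}$, which already gives decay once $\dimension \geq 4$.

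The heart of the argument is the crude polynomial bound $\ellpoperatornorm{\error} \lesssim \radius^2$ on $\ell^1$. Writing $\error = \arithmeticsphericalavgop{\radius} - \HLop$, the averaging piece $\arithmeticsphericalavgop{\radius}$ is $\ell^1$-bounded by an absolute constant since, after our renormalization, $\arithmeticspheremeasure{\radius}$ has total mass comparable to one under the standing assumptions on $\radii$. For $\HLop = \sum_{1 \leq \modulus \leq \radius} \HLop^\modulus$, I use the factorization $\HLop^\modulus = T^\modulus_\radius \circ S^\modulus_\radius$ from Section~\ref{section:main_term}. The analytic factor $T^\modulus_\radius$ has $\ell^1 \to \ell^1$ norm bounded by an absolute constant via the Magyar--Stein--Wainger transference principle applied to the localized continuous spherical convolution. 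For the arithmetic factor, Lemma~\ref{lemma:MSW_periodic_inequality} at $p = 1$ gives $\ellpoperatornorm{S^\modulus_\radius} \lesssim \sup_n |\arithmeticFT{g}(n)|$ where $g(b) = K(\modulus,\radius^2;b)$; a direct computation shows $\arithmeticFT{g}(n) = \sum_{\unit \in \unitsmod{\modulus}} \eof{\unit(n^\degreetwo - \radius^\degreetwo)/\modulus}$, a classical Ramanujan sum bounded trivially by $\totientfunction{\modulus} \leq \modulus$. Summing over $1 \leq \modulus \leq \radius$ delivers $\ellpoperatornorm{\HLop} \lesssim \radius^2$.

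Riesz--Thorin interpolation between the two bounds yields
\[
\lpnorm{p}{\error \fxn} \lesssim_\epsilon \radius^{\frac{\dimension+1}{p} - (\dimension-1) + \epsilon} \lpnorm{p}{\fxn}
\]
for $1 \leq p \leq 2$, with the exponent strictly negative precisely when $p > \frac{\dimension+1}{\dimension-1}$. To pass to the maximal function I use the pointwise bound $\sup_{\radius_j \in \radii} |E_{\radius_j} \fxn(x)| \leq \bigl(\sum_j |E_{\radius_j} \fxn(x)|^p\bigr)^{1/p}$, so that
\[
\lpnorm{p}{\sup_{\radius_j \in \radii} |E_{\radius_j} \fxn|}^p \leq \sum_{\radius_j \in \radii} \lpnorm{p}{E_{\radius_j} \fxn}^p \lesssim \lpnorm{p}{\fxn}^p \sum_{\radius_j \in \radii} \radius_j^{\bigl(\frac{\dimension+1}{p} - (\dimension-1) + \epsilon\bigr) p}.
\]
Since $\radii$ is lacunary and the exponent on $\radius_j$ is negative for sufficiently small $\epsilon > 0$ whenever $p > \frac{\dimension+1}{\dimension-1}$, this collapses to a convergent geometric series and establishes \eqref{eq:weak_error_term_bound}.

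The main obstacle is extracting the $\ell^1 \to \ell^1$ bound on $\HLop$ with only the polynomial loss $\radius^2$. A naive kernel estimate on $\HLop^\modulus$ would introduce an additional factor of $\modulus^\dimension$ arising from the number of arithmetic Dirac masses in a fundamental domain for $(\Zmod{\modulus})^\dimension$, inflating the interpolated exponent and pushing the threshold well above $\frac{\dimension+1}{\dimension-1}$. It is Lemma~\ref{lemma:MSW_periodic_inequality} at $p = 1$ -- essentially a Hausdorff--Young estimate on the periodic side -- combined with the Ramanujan-sum identification of $\arithmeticFT{g}$ that keeps the loss in $\modulus$ to a single power and preserves the stated range of $p$.
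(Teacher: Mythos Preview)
Your argument is correct and follows essentially the same route as the paper: the Kloosterman refinement \eqref{eq:Kloosterman_error_bound} gives the $\ell^2$ decay, a crude $\ell^1$ bound of order $\radius^2$ on $\HLop$ (and hence on $\error$) comes from bounding the Ramanujan sum $c_\modulus(\radius^2-|x|^2)$ trivially by $\totientfunction{\modulus}$ and summing over $\modulus\le \radius$, and interpolation followed by lacunary summation finishes.

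The one genuine difference is packaging. The paper obtains the $\ell^1$ input by computing the physical-space kernel of $\Kloostermanoperator$ directly (Proposition~\ref{prop:trivial_ell1_bound_for_error_term}), extracting the Ramanujan sum as a pointwise factor and reading off a weak-type $(1,1)$ bound $\lpnorm{1,\infty}{\Kloostermanoperator \fxn}\lesssim \tfrac{\radius\,\varphi(\modulus)}{\modulus}\lpnorm{1}{\fxn}$; it then interpolates (Marcinkiewicz) against $\ell^2$. You instead invoke the factorization $\HLop^\modulus=T^\modulus_\radius\circ S^\modulus_\radius$, bound $T^\modulus_\radius$ on $\ell^1$ by transference, and apply Lemma~\ref{lemma:MSW_periodic_inequality} at $p=1$ to $S^\modulus_\radius$; this recovers the same Ramanujan sum (exactly as in the proof of Proposition~\ref{prop:Weil_bound_for_a_multiplier}) and yields a \emph{strong}-type $\ell^1$ bound, so Riesz--Thorin suffices. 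Both routes arrive at the same $\radius^2$ loss and hence the same threshold $p>\frac{\dimension+1}{\dimension-1}$; your version is marginally cleaner in that it avoids the weak-type quasi-norm and reuses machinery already set up in Section~\ref{section:main_term}.
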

The proof for $\ell^2$ is standard: bound the sup by a square function and apply the Kloosterman refinement of \eqref{eq:Kloosterman_error_bound}. To obtain our range of $p$, we will need a suitable bound on $\ell^1(\Z^\dimension)$. For this we have the following proposition. 
\begin{prop}\label{prop:trivial_ell1_bound_for_error_term}
For any modulus $\modulus \in \N$, 
\begin{equation}\label{eq:trivial_ell1_bound}
\lpnorm{1,\infty}{\Kloostermanoperator \fxn} 
\lesssim \frac{\radius \cdot \varphi(\modulus)}{\modulus} \lpnorm{1}{\fxn}
.
\end{equation}
\end{prop}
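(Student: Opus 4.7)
The plan is to separate the arithmetic and analytic features of $\Kloostermanoperator$ by invoking the factorization $\Kloostermanoperator = S^\modulus_\radius \circ T^\modulus_\radius$ developed in Section~\ref{section:main_term}, and then to bound each factor in the stronger $\ell^1(\lattice) \to \ell^1(\lattice)$ operator norm. The embedding $\ell^1 \hookrightarrow \ell^{1,\infty}$ then yields the desired weak-type bound, and the factor $\radius/\modulus$ in the proposition is absorbed for free because in the regime $\modulus \leq \radius$ relevant to the approximation formula one has $\totientfunction{\modulus} \leq \radius\,\totientfunction{\modulus}/\modulus$.

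For $S^\modulus_\radius$, whose multiplier
\[
\sum_{\latticepoint \in \lattice} K(\modulus, \radius^2; \latticepoint)\, \Psi(\modulus \toruspoint - \latticepoint)
\]
fits the template of Lemma~\ref{lemma:MSW_periodic_inequality} with $g(\latticepoint) = K(\modulus, \radius^2; \latticepoint)$ and bump $\Psi(\modulus\,\cdot)$, I apply the lemma at the endpoint $p=1$. There the exponent $2 - 2/p$ vanishes and \eqref{eq:MSW_periodic_inequality} collapses to $\|S^\modulus_\radius\|_{\ell^1 \to \ell^1} \lesssim \sup_n \absolutevalueof{\arithmeticFT{g}(n)}$. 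Unwinding the Kloosterman sum into its Gauss-sum representation and interchanging the order of summation, the orthogonality of additive characters on $(\Zmod{\modulus})^\dimension$ collapses the inner $b$-sum to a Kronecker delta forcing $c \equiv -n \pmod{\modulus}$, leaving the Ramanujan sum
\[
\arithmeticFT{g}(n) = \sum_{\unit \in \unitsmod{\modulus}} \eof{\unit(n^2 - \radius^2)/\modulus},
\]
which is trivially bounded by $\totientfunction{\modulus}$. Hence $\|S^\modulus_\radius\|_{\ell^1 \to \ell^1} \lesssim \totientfunction{\modulus}$.

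For $T^\modulus_\radius$, the Magyar--Stein--Wainger transference lemma (with $B_1 = B_2 = \C$) reduces the problem to the $L^1(\euclideanspace) \to L^1(\euclideanspace)$ norm of the continuous multiplier operator with symbol $\Psi'(\modulus \toruspoint)\, \contFT{d\spheremeasure_\radius}(\toruspoint)$. That norm equals the $L^1$ norm of its convolution kernel, namely $\modulus^{-\dimension}\,\contFT{\Psi'}(-\cdot/\modulus) \convolvedwith d\spheremeasure_\radius$, which Young's inequality bounds by $\|\contFT{\Psi'}\|_{L^1(\euclideanspace)} \cdot \|d\spheremeasure_\radius\|_{TV}$. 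Both factors are $O(1)$ uniformly in $\radius$ and $\modulus$: a change of variables eliminates the $\modulus$-dependence in the first, and the normalized surface measure has the $\radius$-independent total mass $\pi^{\dimension/2}/\Gamma(\dimension/2)$. Combining the two factor bounds yields
\[
\|\Kloostermanoperator\|_{\ell^1 \to \ell^{1,\infty}} \leq \|\Kloostermanoperator\|_{\ell^1 \to \ell^1} \lesssim \totientfunction{\modulus} \leq \frac{\radius\,\totientfunction{\modulus}}{\modulus},
\]
as required. The only genuinely non-routine step is the Fourier-dual identification of $\arithmeticFT{g}$ as a Ramanujan sum; the remaining tools (Lemma~\ref{lemma:MSW_periodic_inequality} at $p=1$, the transference principle, and Young's inequality) are direct appeals to machinery already recorded in Section~\ref{section:MSW_machinery}.
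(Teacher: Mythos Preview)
Your argument is correct, and it differs from the paper's approach in an instructive way. The paper proves the proposition by computing the convolution kernel $K^\modulus_\radius$ of $\Kloostermanoperator$ directly, obtaining the pointwise formula
\[
K^\modulus_\radius(x) = c_\modulus(\radius^2-|x|^2)\cdot \radius^{-\dimension}\,\contFT{\dilateby{\modulus/\radius}\Psi}\convolvedwith d\spheremeasure(x/\radius),
\]
and then using the estimate $\contFT{\dilateby{\modulus/\radius}\Psi}\convolvedwith d\spheremeasure(x)\lesssim (\modulus/\radius)^{-1}(1+|x|)^{-2\dimension}$ together with the Ramanujan-sum bound $|c_\modulus(\cdot)|\leq\totientfunction{\modulus}$. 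This recovers the exact constant $\radius\,\totientfunction{\modulus}/\modulus$ in the statement, the factor $\radius/\modulus$ reflecting the thickness of the approximate spherical shell.

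You instead factor $\Kloostermanoperator=S^\modulus_\radius\circ T^\modulus_\radius$ and bound each piece in the $\ell^1\to\ell^1$ norm. Your $\ell^1$ bound for $S^\modulus_\radius$ is exactly the computation already carried out in the proof of Proposition~\ref{prop:Weil_bound_for_a_multiplier}, and your treatment of $T^\modulus_\radius$ via transference and Young's inequality is clean and uniform in $\modulus,\radius$. The upshot is that you establish the \emph{strong}-type bound $\|\Kloostermanoperator\|_{\ell^1\to\ell^1}\lesssim\totientfunction{\modulus}$, which is in fact sharper than the stated weak-type bound in the regime $\modulus\leq\radius$ (the only regime in which the proposition is invoked in Lemma~\ref{lemma:weak_error_term_bound}). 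Strictly speaking you have not verified the proposition for $\modulus>\radius$, but as you note this is irrelevant to the application. What your route forfeits is the geometric factor $\radius/\modulus$; the paper's direct kernel computation is set up precisely so that this same kernel formula can be reused, with the Ramanujan sums now summed over $\modulus$, in the proof of Proposition~\ref{prop:improved_ell1_bound_for_maximal_operators}.
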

Here and throughout, $\varphi$ denotes Euler's totient function. 
With this bound, we can prove Lemma~\ref{lemma:weak_error_term_bound}. 
\begin{proof}[Proof of Lemma~\ref{lemma:weak_error_term_bound}]
For $\ell^2(\Z^\dimension)$ we have $\lpnorm{\infty}{\arithmeticFT{\error}(\xi)} \lesssim \radius^{-\delta}$ by \eqref{eq:Kloosterman_error_bound}
for all $\delta < \frac{\dimension-3}{2}$. 
On $\ell^1(\Z^\dimension)$ Proposition~\ref{prop:trivial_ell1_bound_for_error_term} implies that $\lpnorm{1,\infty}{C^\modulus_\radius \fxn} \lesssim \radius \lpnorm{1}{\fxn}$ so that 
\begin{equation}
\lpnorm{1, \infty}{\sum_{\modulus=1}^\radius C^\modulus_\radius \fxn} 
\lesssim \radius^2 \lpnorm{1}{\fxn}
\end{equation}
while 
$\lpnorm{1}{\avgop} \lesssim 1$ for each $\radius$ 
so that 
$\lpnorm{1,\infty}{\error \fxn} \lesssim \radius^2 \lpnorm{1}{\fxn}$. 
By interpolation, for $1 < p \leq 2$, 
\[
\lpnorm{p}{\error \fxn} 
\lesssim \radius^{2(\frac{2}{p}-1)} \cdot \radius^{-\delta(2-\frac{2}{p})} 
= \radius^{\frac{2}{p}(2+\delta)-2(1+\delta)} \lpnorm{p}{\fxn}
. 
\]
$\frac{2}{p}(2+\delta)-2(1+\delta) < 0$ if and only $p > \frac{2+\delta}{1+\delta}$. This holds for all  $\delta < \frac{\dimension-3}{2}$ which gives $p > \frac{\dimension+1}{\dimension-1}$. Sum over a lacunary set in this range of $p$ to obtain \eqref{eq:weak_error_term_bound}. 
\end{proof}
\begin{rem}
The best known bound for $\delta$ is all $\delta < \frac{\dimension-3}{2}$ by Magyar's version of Heath-Brown's Kloosterman refinement in \cite{Magyar_discrepancy}. 
Due to the existence of cusp forms, this is the best one can expect. 
\end{rem}
We are left to prove Proposition~\ref{prop:trivial_ell1_bound_for_error_term}. 
We use the structure of the kernel to prove a weak-type bound.
\begin{proof}[Proof of Proposition~\ref{prop:trivial_ell1_bound_for_error_term}]
For $t>0$, let $\dilateby{t}$ be the operator $\dilateby{t}\fxn(x) = \fxn(tx)$. 
Since 
\[
% \Kloostermanmultiplier(\xi) = 
\latticeFT{C^\modulus_\radius}(\xi) 
% = \sum_{\unit \in \unitsmod{\modulus}} \eof{ \frac{\unit \radius^\degreetwo}{\modulus} } \sum_{\latticepoint \in \lattice} \twistedGausssum{x} \dilateby{\modulus} \Psi(\xi - x/\modulus) \contFT{d\spheremeasure_\radius}(\modulus \xi - x )
= \sum_{\latticepoint \in \lattice} K(\modulus, \radius^2, \latticepoint) \dilateby{\modulus} \Psi(\xi - x/\modulus) \contFT{d\spheremeasure_\radius}(\modulus \xi - x ) 
, 
\]
one can calculate the kernel $\Kloostermankernel$ for the multiplier $\latticeFT{C^\modulus_\radius}$ and $x \in \R^\dimension$, 
\begin{equation}\label{eq:kernel_formula}
\Kloostermankernel(x) 
% =: \arithmeticFT{\Kloostermanmultiplier}(x)
% = \eof{\frac{\unit (\radius^\degreetwo-\absolutevalueof{x}^{\degreetwo})}{\modulus}}  \inparentheses{\modulus \bigfrequencydilation}^{-\dimension} \inbrackets{\dilateby{(\modulus \bigfrequencydilation)^{-1}} \contFT{\Psi}} \convolvedwith {d\spheremeasure_\radius}(x)
= \sum_{\unit \in \unitsmod{\modulus}} \eof{\frac{\unit (\radius^\degreetwo-\absolutevalueof{x}^{\degreetwo})}{\modulus}} \cdot \radius^{-\dimension} \contFT{\dilateby{\modulus/\radius} \Psi} \convolvedwith {d\spheremeasure}(x/\radius) 
. 
\end{equation}
A standard argument -- see \cite{Ionescu_spherical} -- shows 
\[
\contFT{\dilateby{\modulus/\radius} \Psi} \convolvedwith {d\spheremeasure}(x)
\lesssim (\modulus/\radius)^{-1} (1+\absolutevalueof{x})^{-2 \dimension}
.
\]
% Let $\Kloostermankernel(x) := \modulus \bigfrequencydilation (1+\absolutevalueof{x})^{-2 \dimension}$ {\bf -- need new notation here!}. 
Then 
\[
\absolutevalueof{\Kloostermankernel(x)} 
\lesssim \radius^{1-\dimension} \inparentheses{1+\absolutevalueof{x/\radius}}^{-2 \dimension} 
. 
\]
$\radius^{-\dimension} \inparentheses{1+\absolutevalueof{x/\radius}}^{-2 \dimension}$ is an approximation to the identity which implies \eqref{eq:trivial_ell1_bound} by the Magyar--Stein--Wainger transference principle. 
\end{proof}

% --
%\input{error_term.tex}
% --
% --
\subsection{The Ramanujan bound for the error term}
% --
In this section we improve the bound \eqref{eq:trivial_ell1_bound} for the error term $\error$. 
The following lemma concludes the proof of Theorem~\ref{thm:variants_of_Euclid}. 
\begin{lemma}\label{lemma:error_term_lemma}
Let \(\dimension \geq 4\). 
If \(\radii\) forms a lacunary sequence
% (such that in 4 dimensions, any $\radius \in \radii$ satisfies $\radius^2 \not\equiv 0 (\mod 4)$)
, then for \(\frac{\dimension-1}{\dimension-2} < p \leq 2\), 
\begin{equation}
\lpnorm{p}{\sup_{\radius \in \radii} \absolutevalueof{\error \fxn}} 
\lesssim_p \lpnorm{p}{\fxn} 
. 
\end{equation}
\end{lemma}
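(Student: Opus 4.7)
\emph{Strategy.} We improve the $\ell^1 \to \ell^1$ operator bound on $\error$ from the trivial $O(\radius^2)$ implicit in Proposition~\ref{prop:trivial_ell1_bound_for_error_term} to $O_\epsilon(\radius^{1+\epsilon})$ by exploiting cancellation among the Ramanujan sums appearing in \eqref{eq:kernel_formula}, then interpolate with the $\ell^2$ bound \eqref{eq:Kloosterman_error_bound}, and finally sum over the lacunary set $\radii$ via Minkowski's inequality.

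Since $\error = \avgop - \sum_{\modulus=1}^{\radius} \HLop^\modulus$ and $\lpnorm{1}{\avgop \fxn} \leq \lpnorm{1}{\fxn}$, it suffices to show the summed kernel $M_\radius(x) := \sum_{\modulus=1}^{\radius} \Kloostermankernel(x)$ satisfies $\lpnorm{1}{M_\radius} \lesssim_\epsilon \radius^{1+\epsilon}$. Formula~\eqref{eq:kernel_formula} can be rewritten as
\[
M_\radius(x) = \radius^{1-\dimension} \sum_{\modulus=1}^{\radius} c_\modulus(\radius^2-|x|^2) \cdot \modulus^{-1} \Phi(\modulus/\radius, x/\radius),
\]
where $c_\modulus(n) := \sum_{\unit \in \unitsmod{\modulus}} \eof{\unit n/\modulus}$ is the classical Ramanujan sum and $\Phi(u,y) := u \cdot (\contFT{\dilateby{u}\Psi} \convolvedwith d\spheremeasure)(y)$ is smooth in $u \in (0,1]$ with $|\Phi(u,y)| \lesssim (1+|y|)^{-2\dimension}$ by the Ionescu-type estimate already used in the proof of Proposition~\ref{prop:trivial_ell1_bound_for_error_term}. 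The arithmetic input is the identity $\sum_{\modulus \geq 1} c_\modulus(n)/\modulus = 0$ for $n \neq 0$, a consequence of $c_\modulus(n) = \sum_{d \,|\, \gcdof{\modulus}{n}} d\, \mu(\modulus/d)$ together with $\sum_{\modulus' \geq 1} \mu(\modulus')/\modulus' = 0$ (equivalent to the prime number theorem). Partial summation then yields $|\sum_{\modulus \leq Q} c_\modulus(n)/\modulus| \lesssim_\epsilon n^\epsilon$, uniformly in $Q$, for $n \neq 0$. A dyadic decomposition of the $\modulus$-sum, using only the slow variation of $\Phi(\modulus/\radius, x/\radius)$ across dyadic blocks in $\modulus$, converts this partial-sum bound into
\[
|M_\radius(x)| \lesssim_\epsilon \radius^{1-\dimension+\epsilon}(1+|x/\radius|)^{-2\dimension}
\]
whenever $|x|^2 \neq \radius^2$. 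For the $O(\radius^{\dimension-2})$ lattice points on the sphere (where $c_\modulus(0) = \totientfunction{\modulus}$ and cancellation fails), the crude estimate $\sum_{\modulus \leq \radius} \totientfunction{\modulus}/\modulus \lesssim \radius$ gives $|M_\radius(x)| \lesssim \radius^{2-\dimension}$, contributing only $O(1)$ to the $\ell^1$ sum. Summation in $x \in \Z^\dimension$ yields $\lpnorm{1}{M_\radius} \lesssim_\epsilon \radius^{1+\epsilon}$, and hence $\ellpoperatornorm{\error} \lesssim_\epsilon \radius^{1+\epsilon}$ on $\ell^1$.

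Riesz--Thorin interpolation between $\radius^{1+\epsilon}$ on $\ell^1$ and $\radius^{2-(\dimension+1)/2+\epsilon}$ on $\ell^2$ produces $\ellpoperatornorm{\error} \lesssim \radius^{-\eta(p)}$ for some $\eta(p)>0$ precisely when $p > \frac{\dimension-1}{\dimension-2}$, on taking $\epsilon$ sufficiently small in both inputs. Since $\radii$ is lacunary, $\sum_{\radius \in \radii} \radius^{-\eta(p)} < \infty$, so Minkowski's inequality gives
\[
\lpnorm{p}{\sup_{\radius \in \radii} |\error \fxn|} \leq \sum_{\radius \in \radii} \lpnorm{p}{\error \fxn} \lesssim \Big(\sum_{\radius \in \radii} \radius^{-\eta(p)}\Big) \lpnorm{p}{\fxn} \lesssim \lpnorm{p}{\fxn},
\]
which is the lemma. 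The main obstacle is harnessing the cancellation in $c_\modulus$ while the analytic factor $\Phi(\modulus/\radius, x/\radius)$ itself varies with $\modulus$: the naive estimate $\sum_\modulus |c_\modulus(n)|/\modulus$ loses all cancellation. The dyadic scheme bypasses the need for any pointwise derivative bound on $\Phi$ and absorbs the logarithmic counting loss from the dyadic blocks into a harmless $\radius^\epsilon$ factor.
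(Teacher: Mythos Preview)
Your proposal is correct and follows essentially the same strategy as the paper: improve the $\ell^1$ bound on $\error$ from $O(\radius^2)$ to $O_\epsilon(\radius^{1+\epsilon})$ via Ramanujan-sum cancellation in the kernel, interpolate with \eqref{eq:Kloosterman_error_bound}, and sum over the lacunary sequence. The paper phrases the arithmetic input as the dyadic average bound \eqref{eq:average_bound_for_Ramanujan_sums} on $\sum_{Q\leq q<2Q}|c_q(N)|$ rather than your signed partial-sum bound, and aims for a weak-type $(1,1)$ estimate via transference rather than a direct $\ell^1$ kernel bound, but these are cosmetic differences; your explicit treatment of the on-sphere points $|x|=\radius$ (where $c_q(0)=\varphi(q)$ and \eqref{eq:average_bound_for_Ramanujan_sums} fails) fills in a detail the paper leaves implicit, while your appeal to the prime number theorem is unnecessary since the elementary bound $\bigl|\sum_{q'\leq M}\mu(q')/q'\bigr|\leq 1$ already yields $\bigl|\sum_{q\leq Q}c_q(n)/q\bigr|\leq d(n)$.
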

The strategy is the same as in Lemma~\ref{lemma:weak_error_term_bound} but we improve the bound on $\ell^1(\Z^\dimension)$ to the following. 
\begin{prop}\label{prop:improved_ell1_bound_for_maximal_operators}
For $\radius \in \acceptableradii$ and all \(\epsilon>0\), we have 
\begin{equation}\label{eq:improved_ell1_bound}
\lpnorm{1,\infty}{\sum_{\modulus = 1}^{\radius} \Kloostermanoperator \fxn} 
\lesssim_{\epsilon} \radius^{1+\epsilon} \lpnorm{1}{\fxn} 
.
\end{equation}
\end{prop}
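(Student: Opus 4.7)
The plan is to identify the Ramanujan sum structure in the kernel $\Kloostermankernel$ and to exploit the pointwise cancellation bound $|c_\modulus(n)| \leq (\modulus, n)$ for the Ramanujan sum $c_\modulus(n) := \sum_{\unit \in \unitsmod{\modulus}} \eof{\unit n/\modulus}$. This is precisely the improvement over the crude $|c_\modulus(n)| \leq \varphi(\modulus)$ (i.e., bounding a sum of $\varphi(\modulus)$ unit--norm terms by the number of terms) which produced the $\radius^2$ loss in Proposition~\ref{prop:trivial_ell1_bound_for_error_term}. Starting from the kernel formula \eqref{eq:kernel_formula}, for integer $x \in \Z^\dimension$ we have $\radius^\degreetwo - |x|^\degreetwo \in \Z$, so
\begin{equation*}
\Kloostermankernel(x) = c_\modulus(\radius^\degreetwo - |x|^\degreetwo) \cdot E_\modulus(x), \qquad E_\modulus(x) := \radius^{-\dimension}\, \contFT{\dilateby{\modulus/\radius}\Psi} \convolvedwith d\spheremeasure(x/\radius).
\end{equation*}
From the proof of Proposition~\ref{prop:trivial_ell1_bound_for_error_term} the Euclidean piece satisfies, for any $N$,
\begin{equation*}
|E_\modulus(x)| \lesssim_N \modulus^{-1}\, \radius^{1-\dimension}\,(1+|x|/\radius)^{-N}.
\end{equation*}

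The arithmetic input will be the elementary estimate, valid for $n \neq 0$,
\begin{equation*}
\sum_{\modulus=1}^{\radius} \frac{(\modulus, n)}{\modulus}
= \sum_{\substack{d \mid n \\ d \leq \radius}}\; \sum_{\substack{\modulus' \leq \radius/d \\ \gcd(\modulus', n/d) = 1}} \frac{1}{\modulus'}
\lesssim d(n) \log \radius,
\end{equation*}
obtained by reorganising the sum according to $d = (\modulus, n)$. Combining this with the Schwartz bound on $E_\modulus$ and the divisor bound $d(n) \lesssim_\epsilon |n|^\epsilon$, we obtain, for $x \in \Z^\dimension$ with $|x|^\degreetwo \neq \radius^\degreetwo$,
\begin{equation*}
\sum_{\modulus=1}^{\radius} |\Kloostermankernel(x)| \lesssim_{\epsilon, N} \radius^{1-\dimension+\epsilon}\,(1+|x|/\radius)^{-N}.
\end{equation*}
Summing over $x$ (with $N$ chosen large enough to defeat the divisor-function loss in $|n| \leq \radius^\degreetwo + |x|^\degreetwo$) contributes $\lesssim \radius^{1+\epsilon}$.

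The remaining case is $x \in \arithmeticsphere$, where $n = 0$ and $c_\modulus(0) = \varphi(\modulus)$; using $\sum_{\modulus \leq \radius} \varphi(\modulus)/\modulus \lesssim \radius$, the Schwartz bound $|E_\modulus(x)| \lesssim \modulus^{-1}\radius^{1-\dimension}$ near the sphere, and $|\arithmeticsphere| \lesssim \radius^{\dimension-2}$, this contribution is only $O(1)$. Altogether the $\ell^1(\Z^\dimension)$ norm of the convolution kernel of $\sum_{\modulus=1}^{\radius} \Kloostermanoperator$ is $\lesssim_\epsilon \radius^{1+\epsilon}$, which yields \eqref{eq:improved_ell1_bound} (indeed the stronger $\ell^1 \to \ell^1$ bound). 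The only real obstacle is cleanly pairing the polynomial growth of $d(n)$ for $|n| \sim |x|^\degreetwo$ against the Schwartz tail of $E_\modulus$ for $|x| \gg \radius$; this is technical but routine given the rapid decay of $\contFT{\Psi}$.
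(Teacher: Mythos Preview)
Your argument is correct and rests on the same core idea as the paper---recognising the Ramanujan sum $c_\modulus(\radius^2-|x|^2)$ in the kernel and exploiting its divisor-type cancellation---but your execution is more direct. The paper invokes the dyadic average bound $\sum_{Q\le\modulus<2Q}|c_\modulus(N)|\lesssim Q\, d(N,Q)$ together with a partition of unity across the scales of $\dilateby{\modulus}\Psi$, and then appeals to the Magyar--Stein--Wainger transference principle to convert the pointwise kernel estimate on $\R^\dimension$ into the weak-type $(1,1)$ bound. You instead use the pointwise inequality $|c_\modulus(n)|\le(\modulus,n)$, evaluate $\sum_{\modulus\le\radius}(\modulus,n)/\modulus\lesssim d(n)\log\radius$ explicitly, and bound the $\ell^1(\Z^\dimension)$-norm of the convolution kernel directly on the lattice; this sidesteps both the partition of unity and the transference step, handles the on-sphere case $|x|^2=\radius^2$ separately (which the paper's sketch glosses over), and as you observe actually delivers the slightly stronger strong-type $\ell^1\to\ell^1$ bound. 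The arithmetic input is equivalent in strength; your route is arguably cleaner for this proposition.
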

% 

%\begin{rem}
%Our improvement in Proposition~\ref{prop:improved_ell1_bound_for_maximal_operators} over Proposition~\ref{prop:trivial_ell1_bound_for_error_term} can be explained by the fact that we are fixing a single radius and not destroying as much oscillation to account for a supremum over all possible radii when the radius may depend on the modulus. 
%\end{rem}

% 
% The proof of Lemma~\ref{lemma:error_term_lemma} follows by interpolation with the $\ell^2(\Z^\dimension)$ bound \eqref{eq:Kloosterman_error_bound} and summing as before. 
% 
% \begin{proof}[Proof of Lemma~\ref{error_term_lemma}]
% With this bound we now have that 
% $\lpnorm{1}{\error} \lessapprox \radius$. 
% By interpolation, for $1 \leq p \leq 2$, 
% \[
% \lpnorm{p}{\error} 
% \lessapprox \radius^{(\frac{2}{p}-1)} \cdot \radius^{-\delta(2-\frac{2}{p})} 
% = \radius^{\frac{2}{p}(1+\delta)-(1+2\delta)} 
% . 
% \]
% $\frac{2}{p}(1+\delta)-(1+2\delta) < 0$ if and only $p > \frac{2(1+\delta)}{1+2\delta}$. The best value of $\delta$ that we can take is $\frac{\dimension-3}{2}$ which gives $p > \frac{\dimension-1}{\dimension-2}$. Sum over a lacunary set in this range of $p$ to obtain the bound. 
% \end{proof}
% 

% 
The sums 
\begin{equation}
c_\modulus(N) 
:= \sum_{\unit \in \unitsmod{\modulus}} \eof{\frac{\unit N}{\modulus}}
\end{equation}
are known as \emph{Ramanujan sums} and clearly satisfy the bound $\absolutevalueof{c_\modulus(N)} \leq \phi(\modulus)$ for all $N$. However, there is an improved bound on average -- see (3.44) on page 126 of \cite{Bourgain_lattice_restriction_1}: 
\begin{equation}\label{eq:average_bound_for_Ramanujan_sums}
\sum_{Q \leq \modulus < 2Q} \absolutevalueof{c_\modulus(N)} 
= \sum_{Q \leq \modulus < 2Q} \absolutevalueof{\sum_{\unit \in \unitsmod{\modulus}} \eof{\frac{\unit N}{\modulus}}}
\lesssim Q \cdot d(N, Q)
\end{equation}
where $d(N, Q)$ is the number of divisors of $N$ up to $Q$. Therefore we can bound the above average of Ramanujan sums, \eqref{eq:average_bound_for_Ramanujan_sums} by $\lesssim_\epsilon Q \cdot N^{\epsilon}$ for all $\epsilon > 0$; that is, with a ``log-loss". 
Using the improved average bound for Ramanujan sums, we improve \eqref{eq:trivial_ell1_bound} to \eqref{eq:improved_ell1_bound}. 

\begin{proof}[Proof of Proposition~\ref{prop:improved_ell1_bound_for_maximal_operators}]
Again, for $t>0$, let $\dilateby{t}$ be the operator $\dilateby{t}\fxn(x) = \fxn(tx)$. 
We rewrite \eqref{eq:kernel_formula} as 
\begin{equation}
\Kloostermankernel(x) 
= c_\modulus(\radius^\degreetwo-\absolutevalueof{x}^{\degreetwo}) \cdot \radius^{-\dimension} \contFT{\dilateby{\modulus/\radius} \Psi} \convolvedwith {d\spheremeasure}(x/\radius)
\end{equation}
By the Magyar--Stein--Wainger transference principle, \eqref{eq:improved_ell1_bound} will follow from proving the pointwise bound for all $x \in \R^\dimension$ and any $\epsilon > 0$, 
\begin{equation}\label{eq:pointwise_bound_for_kernels}
\absolutevalueof{\sum_{\modulus = 1}^{\radius} \Kloostermankernel(x)}
\lesssim_{\epsilon} \radius^{1+\epsilon-\dimension} \inparentheses{1+\absolutevalueof{x/\radius}}^{-2 \dimension}
. 
\end{equation}
From 
\begin{equation}
\Kloostermankernel(x) 
= c_\modulus(\radius^\degreetwo-\absolutevalueof{x}^{\degreetwo}) \cdot \contFT{\dilateby{\modulus} \Psi} \convolvedwith {d\spheremeasure_\radius}(x) 
, 
\end{equation}
we easily see 
\begin{equation}
\sum_{\modulus = 1}^{\radius} \Kloostermankernel(x) 
= \inbrackets{\sum_{\modulus = 1}^{\radius} c_\modulus(\radius^\degreetwo-\absolutevalueof{x}^{\degreetwo}) \cdot \contFT{\dilateby{\modulus} \Psi}} \convolvedwith {d\spheremeasure_\radius}(x) 
. 
\end{equation}
Note that $\dilateby{\modulus} \Psi$ is supported in $[-1/4\modulus, 1/4\modulus]^\dimension$ for each $1 \leq \modulus \leq \radius$. 
Using an appropriate partition of unity for each $\dilateby{\modulus} \Psi$, we are able to sum over $q \leq \radius$ and use \eqref{eq:average_bound_for_Ramanujan_sums} to obtain \eqref{eq:pointwise_bound_for_kernels}. 
% 
% For each $1 \leq \modulus \leq \radius$ we make a partition of unity so that $\dilateby{\modulus} \Psi = \sum_{k=1}^\modulus \phi_k^\modulus$ satisfies the following properties: 
% \begin{itemize}
% \item $\phi_k^{\modulus_1} = \phi_k^{\modulus_2}$ for any $\modulus_1, \modulus_2 \leq \radius$ and $k \leq \modulus_1, \modulus_2$, 
% \end{itemize}
% % 
% Then 
% \begin{align*}
% \sum_{\modulus = 1}^{\radius} \Kloostermankernel(x) 
% & = \inbrackets{\sum_{\modulus = 1}^{\radius} c_\modulus(\radius^\degreetwo-\absolutevalueof{x}^{\degreetwo}) \cdot \contFT{\dilateby{\modulus} \Psi}} \convolvedwith {d\spheremeasure_\radius}(x) \\
% & = \inbrackets{\sum_{\modulus = 1}^{\radius} c_\modulus(\radius^\degreetwo-\absolutevalueof{x}^{\degreetwo}) \cdot \sum_{k=1}^\modulus \contFT{\phi_k^\modulus}} \convolvedwith {d\spheremeasure_\radius}(x) \\
% & = \inbrackets{\sum_{k = 1}^{\radius} \sum_{\modulus=1}^k c_\modulus(\radius^\degreetwo-\absolutevalueof{x}^{\degreetwo}) \cdot \contFT{\phi_k^\modulus}} \convolvedwith {d\spheremeasure_\radius}(x) 
% . 
% \end{align*}
% After interchanging the sums, we can apply \eqref{average_bound_for_Ramanujan_sums}. 
% 
\end{proof}

% --
%\input{ENFANT_example.tex}
% --

% ---
\section{Proof of Theorem~\ref{thm:ENFANT}}\label{section:ENFANT_example} 
% ---

Fix $\sequencegrowth > 1$ a real number. 
In this section we prove Theorem~\ref{thm:ENFANT} for the collection of radii 
\[
\radii 
:= 
\setof{\radius_j \in \R^+ : \radius_j^2 = 1 + \prod_{i \leq h(j)} \oddprime_i}
\] 
where \(h(j) := 2^{j^\sequencegrowth}\). 
The proof for the remaining sequences in Theorem~\ref{thm:ENFANT} is similar, but notationally cumbersome. 

Let $\theprimes$ denote the set of primes in $\N$. 
We split the primes into bad primes and good primes as follows. 
Let the \emph{bad primes} $\badprimes$ be the set of primes dividing $\radius_j^2$ for some radius $\radius_j \in \radii$ together with the prime 2. 
Let the \emph{good primes} $\goodprimes := \theprimes \setminus \badprimes$ be the remaining primes. 
We enumerate the primes so that \( \oddprime_n \) denotes the \(n^{th}\) prime. 
The Prime Number Theorem says that \(\oddprime_n \sim n \log n\) as \(n \to \infty\).

If $\oddprime$ is a prime, then choose $J$ such that $\oddprime_{h(J)} \leq \oddprime < \oddprime_{h(J+1)}$. 
If $j > J$, then $\radius_j^2 \equiv 1 \mod \oddprime$, 
and 
\[
% \# \setof{\radius^2 \mod \oddprime : \radius \in \radii} 
\# \setof{i \in \Zmod{\oddprime} : \radii_{i (\oddprime)} \not= \emptyset} 
\leq J+1 
\lesssim J^\sequencegrowth 
% \lesssim \inparentheses{\log \oddprime}^{1/\sequencegrowth} 
\lesssim \log \oddprime 
\]
where the last inequality follows from the Prime Number Theorem, with an implicit constant that is independent of the prime $\oddprime$. 
The last inequality is explained as follows. By the prime number theorem, $\oddprime_{2^{J^\sequencegrowth}} \eqsim 2^{J^\sequencegrowth} \cdot \log {2^{J^\sequencegrowth}} \eqsim 2^{J^\sequencegrowth} \cdot {J^\sequencegrowth}$. This implies that $\log \oddprime_{2^{J^\sequencegrowth}} \eqsim {J^\sequencegrowth} + \sequencegrowth \log J \eqsim J^\sequencegrowth$ so that $J < J^\sequencegrowth \lesssim \log \oddprime$. 
These estimates hold for every prime; in particular, they hold for the good primes.

An essential point is that there are few bad primes for our sequence; this is quantified by the following bound: 
\begin{equation}\label{eq:bad_primes_bound}
\sum_{\oddprime \in \badprimes} \oddprime^{-1} 
\leq 
\sum_{j=1}^\infty h(j) \cdot \oddprime_{h(j)}^{-1} 
\lesssim 
\sum_{j=1}^\infty h(j) \inbrackets{h(j) \log h(j)}^{-1} 
= 
\sum_{j=1}^\infty \inbrackets{\log h(j)}^{-1} 
. 
\end{equation}
The first inequality is true since each prime dividing $\radius_j$ is at least of size $\oddprime_{h(j)}$ and there are at most $h(j)$ prime divisors, and the last inequality follows from the Prime Number Theorem which says $\oddprime_n \eqsim n \log n$. 
Since $h(j) = 2^{j^\sequencegrowth}$ for some $\sequencegrowth > 1$, \eqref{eq:bad_primes_bound} converges. 
% 

% ---
\section{Concluding remarks and open questions}\label{section:conclusion}
% ---
%
\begin{question}
Estimate \eqref{eq:bad_primes_bound} of the Dirichlet series 
\(
\sum_{\oddprime \in \badprimes} \oddprime^{-s} 
\) 
is rather crude. 
Improving this estimate would improve our range of $\ell^p(\Z^\dimension)$-spaces, potentially to $p > \frac{\dimension-1}{\dimension-2}$
% , which is the barrier resulting from bounding the good primes by \eqref{eq:Weil_bound_for_a_multiplier}, and separately, our treatment of the error term in section~\ref{section:error_term}
. 
The author is unaware of any investigations of our Dirichlet series in the literature. 
%The Dirichlet series does not appear to have been investigated in the literature. 
\emph{
Does 
\(
\sum_{\oddprime \in \badprimes} \oddprime^{-s} 
\) 
converge for some \(s \in (0,1)\) for sequences related to Theorem~\ref{thm:ENFANT}? 
}
\end{question}

\begin{question}
\emph{
Can we prove \eqref{eq:bad_primes_bound} where \(h(j)\) grows more slowly such as \(h(j):=j\)? 
}
\end{question}

\begin{rem}
In section~\ref{subsection:Ackbar}, we mentioned that J. Zienkiewicz showed that Conjecture~\ref{conjecture:lacunary} fails in general. 
More generally, one can show that if \eqref{good_primes_estimate} is violated for infinitely many primes, then \(\sequentialarithmeticsphericalmaxfxn\) is unbounded on \(\ell^p(\Z^\dimension)\) for \(p\) close to 1 and 
\(\dimension \geq 5\). 
We revise Conjecture~\ref{conjecture:lacunary} to take into account this obstruction. 
\begin{conjecture}\label{conjecture:adelic_lacunary}
For $\dimension \geq 5$, if $\radii$ is a lacunary subsequence of $\acceptableradii$ such that \eqref{good_primes_estimate} holds for all but finitely primes \(\oddprime\), then $\sequentialarithmeticsphericalmaxop: \ell^1(\Z^\dimension) \to \ell^{1,\infty}(\Z^\dimension)$.
% 
% For $\dimension \geq 5$, if $\radii$ is a lacunary subsequence of $\acceptableradii$, then $\sequentialarithmeticsphericalmaxop: \ell^p(\Z^\dimension) \to \ell^p(\Z^\dimension)$ for \(p>1\). 
% 
The same is true if \(\dimension = 4\) and 2 is a good prime. 
\end{conjecture}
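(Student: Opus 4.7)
My plan is to reduce the conjectured \(\ell^1(\Z^\dimension) \to \ell^{1,\infty}(\Z^\dimension)\) endpoint bound to three ingredients: (i) the endpoint \(L^1 \to L^{1,\infty}\) inequality for the continuous lacunary spherical maximal function (now available in \cite{STW_endpoint_spherical}), (ii) an endpoint version of the Magyar--Stein--Wainger transference principle, and (iii) a Calderón--Zygmund decomposition for the error term, run over a lacunary sequence.

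Following the split \(\arithmeticsphericalavgop{\radius} = \HLop + \error\) of The Approximation Formula, the key observation is that with only finitely many bad primes the set of possible squarefree products of bad primes---and hence the set of possible values of \(\badmodulus\)---is a \emph{finite} set \(\{Q_1,\dots, Q_k\}\). Thus
\[
\HLop = \sum_{i=1}^{k} \sum_{\goodmodulus} \HLop^{\goodmodulus Q_i},
\]
and it suffices to bound each inner sum uniformly in \(i\). For fixed \(i\), I would use the refined factorization \(\HLop^{\goodmodulus Q_i} = T^{\goodmodulus Q_i}_{\radius} \circ U^{1,\goodmodulus Q_i}_{\radius} \circ U^{2,\goodmodulus Q_i}_{\radius}\) from Section~\ref{section:main_term} to separate the smooth piece \(T \circ U^{2}\), which transfers to the continuous lacunary spherical maximal function and inherits its endpoint bound, from the arithmetic piece \(U^{1}\), which is governed by Kloosterman sums and the counting bound \eqref{eq:Euclidean_radii_good_prime_set_size}. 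I would then sum over \(\goodmodulus\) by invoking the Weil bound \eqref{eq:Weil_bound_for_a_multiplier}, compensating for the loss of the \(\totientfunction{\modulus}^{2/p-1}\) saving at \(p=1\) by exploiting the polylogarithmic number of distinct arithmetic multipliers that actually appear as \(\radius\) ranges over a lacunary sequence.

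For the error term \(\error\), the natural route is a Calderón--Zygmund decomposition of the input at level \(\lambda\). Proposition~\ref{prop:improved_ell1_bound_for_maximal_operators} already furnishes favorable pointwise control on \(\sum_{\modulus \leq \radius} \Kloostermankernel\), while \eqref{eq:Kloosterman_error_bound} gives \(\ell^2\) decay of size \(\radius^{-\delta}\) for \(\arithmeticFT{\error}\). A square-function argument over the lacunary radii, applied with the \(\ell^2\) decay on the ``good'' part of the decomposition and the kernel estimate on the ``bad'' part, should deliver the weak-\(\ell^1\) endpoint for \(\sup_{\radius \in \radii}\absolutevalueof{\error \fxn}\); the lacunarity is precisely what makes the resulting square function summable at the endpoint.

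The main obstacle is item (ii): the Magyar--Stein--Wainger transference principle as stated is a strong-type inequality, and upgrading it to weak-type \((1,1)\) with the correct uniformity in \(\modulus\) and in the Banach space is delicate, because the standard proof via averaging over translates interacts poorly with the \(\ell^{1,\infty}\) quasinorm. A closely related sub-obstacle is that at \(p=1\) the Weil multiplier bound loses its \(\totientfunction{\modulus}^{2/p-1}\) saving, so the sum over good moduli in the main term cannot be handled term-by-term and must instead be organized as a single vector-valued operator whose kernel is amenable to Calderón--Zygmund theory. Together, these two ingredients would go genuinely beyond the present paper and represent, in my view, the essential technical leap required to close the gap between Theorem~\ref{thm:variants_of_Euclid} and the conjectured endpoint.
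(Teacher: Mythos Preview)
The statement you are attempting to prove is \emph{Conjecture~\ref{conjecture:adelic_lacunary}}, which the paper explicitly leaves open; there is no proof in the paper to compare against. Your proposal is therefore a sketch of a possible attack on an open problem, and you yourself correctly flag the two principal obstructions (a weak-type version of the Magyar--Stein--Wainger transference lemma, and the collapse of the Weil multiplier saving at \(p=1\)). Those are indeed the right obstacles, and the paper offers no tools to overcome them.

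That said, there are two concrete errors in your outline that would need to be repaired before the strategy could even be formulated precisely.

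First, your ingredient (i) is not available. The paper states that the \(L^1(\R^\dimension) \to L^{1,\infty}(\R^\dimension)\) bound for the continuous lacunary spherical maximal function is itself a \emph{conjecture}; \cite{STW_endpoint_spherical} and \cite{STW_pointwise_lacunary} are cited only as partial progress. You cannot invoke it as an input.

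Second, your claim that ``the set of possible values of \(\badmodulus\) is a finite set \(\{Q_1,\dots,Q_k\}\)'' is false. Finitely many bad primes means finitely many \emph{prime} values, but \(\badmodulus\) ranges over all products of \emph{arbitrary powers} of those primes, which is an infinite set. This matters at the endpoint: in the paper's estimate the bad-prime factor is \(\badmodulus^{1-\dimension/p'}\), and at \(p=1\) one has \(p'=\infty\), so the exponent is \(+1\) and the sum \(\sum_{\badmodulus} \badmodulus\) diverges even with a single bad prime. Thus the finiteness of \(\badprimes\) does not by itself make the bad-modulus sum harmless at \(p=1\); some genuinely new mechanism is required there as well.
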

\end{rem}

\begin{question}
There is an elegant characterization of the \(L^p(\R^\dimension)\)-boundedness of the continuous spherical maximal function over subsequences of \(\R^+\) in \cite{SWW_spherical}. 
\emph{Is there such a characterization for the discrete spherical averages?} 
Zienkiewicz's result shows that any such characterization must also account for arithmetic phenomena. 
\end{question}

\bigskip
% \newpage
% ---
\section*{Acknowledgements}
% ---
The author would like to thank Lillian Pierce for discussions on the arithmetic lacunary spherical maximal function and for pointing out a critical mistake in a previous version of this paper, and his advisor, Elias Stein, for introducing him to the problem. 
The author would also like to thank Roger Heath-Brown for a discussion on the limitations of the circle method, Peter Sarnak for discussions regarding Kloosterman sums and Jim Wright for explaining aspects of the continuous lacunary spherical maximal function. 
And a special thanks to Lutz Helfrich, James Maynard and Kaisa Matom\"aki at the Hausdorff Center for Mathematics's ENFANT and ELEFANT conferences in July 2014 for pointing out the family of sequences used in Theorem~\ref{thm:ENFANT}.

% --
\bigskip
%\newpage
% --
\bibliographystyle{amsalpha}
\bibliography{references_lacunary_arithmetic}
\end{document}